\documentclass[11pt]{amsart}
\usepackage{amsmath, amssymb, amsthm}
\usepackage{hyperref}
\usepackage{tikz}
\usepackage{subfigure}
\usepackage{ytableau}
\usepackage{array}
\usepackage{amsfonts}
\usepackage{mathtools}
\theoremstyle{plain}
\newtheorem{theorem}{Theorem}[section]
\newtheorem{lemma}[theorem]{Lemma}
\newtheorem{corollary}[theorem]{Corollary}
\newtheorem{proposition}[theorem]{Proposition}

\renewcommand{\Im}{\operatorname{Im}}

\numberwithin{equation}{section}
\begin{document}
	\title[ Asymptotic Formula for Multipartitions
	]
	{Asymptotic Formula for Multipartitions} 
	\author{JAYANTA BARMAN}
	\address{JAYANTA BARMAN\\ Department of Mathematics \\
		Indian Institute of Technology Kharagpur \\
		Kharagpur-721302,  India.} 
	\email{b1999jayanta@gmail.com}
	
	\author{Kamalakshya Mahatab}
	\address{Kamalakshya Mahatab\\ Department of Mathematics \\
		Indian Institute of Technology Kharagpur \\
		Kharagpur-721302,  India.} 
	\email{accessing.infinity@gmail.com, kamalakshya@maths.iitkgp.ac.in}
	
	\subjclass[2020]{11P82, 11N37}
	\keywords{ Partition Function, t-multipartition, Saddle Point Method}
	\begin{abstract} 
		Let $p_t(N)$ denote the number of $t$-multipartitions of a positive integer $N$. In this article, we obtain an asymptotic formula for $p_t(N)$, when $t \ll N^{1 - \epsilon}$, for any $\epsilon > 0$.
	\end{abstract}
	\maketitle
	\section{Introduction}\label{section1} 
	The asymptotic theory of partition functions has its origin in the celebrated work of Hardy and Ramanujan \cite{hardy1918asymptotic}, who introduced the circle method and obtained an asymptotic formula for the ordinary partition function $p(N)$. 
	Although unrestricted partitions form a central object of study, imposing constraints on partitions opens many deep and interesting questions in both arithmetic and combinatorics.
	One classical example is that of $t$-multipartitions: a $t$-multipartition of a positive integer $N$ is a sequence $\left(\mu^{(1)},\mu^{(2)},\ldots,\mu^{(t)}\right)$, where each $\mu^{(j)}$ is an integer partition (possibly empty) for all $j$ such that $\sum_{j=1}^{t}\left|\mu^{(j)}\right|=N$. Here, $\left|\mu^{(j)}\right|$ denotes the sum of the parts of the partition $\mu^{(j)}$. Let $p_{t}(N)$ denote the number of $t$-multipartitions of $N$. Further, the number of $t$-multipartitions of $N$ coincides with the number of $t$-colored partitions of $N$. Equivalently, a $t$-colored partition of $N$ is a partition in which each part may be assigned one of $t$ available colors, with the order of the colors being immaterial. For instance, the $2$-colored partitions of $3$ are $3_1, 3_2, 2_1+1_1, 2_1+1_2, 2_2+1_1,2_2+1_2,1_1+1_1+1_1, 1_2+1_1+1_1, 1_2+1_2+1_1, 1_2+1_2+1_2.$ 
	Since these two notions are equivalent from an enumerative point of view, we may also interpret $p_t(N)$ as the number of $t$-colored partitions of $N$. Various properties of $p_t(N)$, including log-concavity, arithmetic properties, and multiplicative properties, have been studied in \cite{bringmann, bringmann1, chern}. The generating function for $t$-multipartitions, together with their congruence and other arithmetic properties, has been investigated extensively in \cite{andrews, atkin, chen2014}. 
	
	James and Kerber \cite[Corollary 4.4.4]{MR644144} showed that $p_t(N)$ can be expressed as a sum of products of partition functions, given by
	\begin{equation*}
		\sum_{\substack{N=n_{1}+n_{2}+\cdots+n_{t}\\ n_{j}\ge 0}} p(n_{1})p(n_{2})\cdots p(n_{t}).
	\end{equation*}
	This is equal to the number of irreducible representations of the wreath product $G \wr S_N$, where the group $G$ has $t$ conjugacy classes and $S_N$ is the symmetric group. There are also applications of $p_t(N)$ in the representation theory of Lie algebras \cite{bouwknegt2002,fayers2006weights}. In \cite{bouwknegt2002}, multipartitions play an important role in the study of Durfee systems and their existence. Furthermore, \cite{fayers2006weights} shows that the irreducible representations of the Ariki--Koike algebra are naturally indexed by $t$-multipartitions of $N$. Multipartitions are also important in algebraic geometry. For a smooth projective surface $S$, let $S^{[N]}$ be the Hilbert scheme of $N$ points, which is a smooth projective variety of dimension $2N.$ Let $\chi(S)$ and $\chi(S^{[N]})$ be the topological Euler characteristic of $S$ and $S^{[N]}$, respectively. Then (see \cite[equation $(2)$, Theorem 0.1]{gottsche}),
	\begin{align*}
		\chi\left(S^{[N]}\right)=p_{\chi(S)}(N).
	\end{align*}
	For applications of multipartitions to gauge theory and random partitions, see \cite{nekrasov}. 

	In \cite{murty2013partition}, Murty applied the Laplace saddle point method to derive an asymptotic formula for $p_t(N)$ for fixed $t$ without providing any error term. This result also follows from Meinardus's formula \cite{meinardus1953}, where special functions play an essential role. In this article, we extend this analysis to the range $t \ll N^{1-\epsilon}$ for any $\epsilon>0$, instead of a fixed $t$, using the saddle point method. This method also has been successfully used to obtain asymptotic formulas for partition functions and related combinatorial sequences \cite{barman2025lower, barman2026asymptotic, tyler2026asymptotics}. 
	
	Now to state our main theorem and give an idea of its proof, we recall some identities involving $p_t(N)$. 
	The generating function for $p_{t}(N)$ is given by
	\begin{equation*}
		G(q)=\prod_{N=1}^\infty \left(1-q^{N}\right)^{-t}=\sum_{N=0}^{\infty}p_t(N)q^{N}=\left(\prod_{N=1}^\infty \frac{1}{\left(1-q^{N}\right)}\right)^{t}=\frac{q^{\frac{t}{24}}}{\eta(z)^{t}}, 
	\end{equation*}
	where $q=\exp(2\pi iz)$, $z=x+iy$ and $y>0$. The Dedekind eta function $\eta(z)$ is given by 
	\begin{equation*}
		\eta(z)=\exp\left(\frac{\pi iz}{12}\right)\prod_{n=1}^{\infty}(1-\exp(2\pi inz)).  
	\end{equation*}
	We will use the functions $\mu_m$, $m\ge 1$, from \cite{tyler2026asymptotics} throughout this article:
	\begin{equation}\label{eq-muk}
		\mu_{m}(z)=-\frac{z^{m+1}}{2\pi i} \left(\frac{d}{dz}\right)^{m} \log\eta(z).
	\end{equation}
	By the Cauchy integral formula, the $N$-th coefficient of the series is given by
	\begin{equation*}
		p_{t}(N)=\frac{1}{2\pi i}\int_{\gamma}\frac{G(q)}{q^{N+1}}dq, 
	\end{equation*}
	where $\gamma$ is a simple positively-oriented loop around the origin, located entirely in the unit circle. Let $\gamma$ be the contour defined by $q=\exp(2\pi i z)$. The condition $|q|<1$ holds if and only if $y>0$. For a fixed value of $y$, as $x$ varies over any interval of length $1$, the variable $q$ moves along a full circle of radius $e^{-2\pi y}$. Therefore, we can write $p_t(N)$ as
	\begin{align}\label{eq-integral}
		p_{t}(N)&=\int_{-1/2}^{1/2} \exp\left( -2\pi izM\right)a_{t}(z)dx,
	\end{align}
	where 
	\begin{align*}
		a_{t}(z)=\frac{1}{\eta(z)^{t}}\quad\text{and}\quad M=N-\frac{t}{24}.
	\end{align*}	
	
	We now state the main theorem of this article.
	\begin{theorem}\label{theorem-1.2}
		Let $t$ be a positive integer $<N$.
		\newline
		(i) Then there exists a unique solution $y>0$, such that
		\begin{align}\label{eq-1.1}
			y^{2}M+t\mu_{1}(iy)=0.
		\end{align}
		(ii) Corresponding to this value of $y$, the function $p_t(N)$ satisfies the following asymptotic approximation:
		\begin{align*}
			p_t(N)=\frac{y^{\frac{3}{2}}\exp(2\pi My)a_{t}(iy)}{\sqrt{t\mu_{2}(iy)}}\left(1+O\left(\frac{y}{t}\right)\right).    
		\end{align*}
	\end{theorem}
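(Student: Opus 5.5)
The plan is to obtain (i) from an explicit formula for $\mu_1$ in terms of the Eisenstein series $E_2$, and (ii) from a standard saddle point analysis of \eqref{eq-integral} on the horizontal line $\Im z=y$.

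\textbf{Part (i).} Logarithmic differentiation of the product for $\eta$ gives
\[
\frac{d}{dz}\log\eta(z)=\frac{\pi i}{12}E_2(z),\qquad E_2(z)=1-24\sum_{n\ge1}\sigma(n)q^n .
\]
Substituting into \eqref{eq-muk} gives $\mu_1(z)=-z^2E_2(z)/24$, so $\mu_1(iy)=y^2E_2(iy)/24$. Dividing \eqref{eq-1.1} by $y^2$ and using $M=N-t/24$, the equation becomes
\[
\frac{N}{t}=\sum_{n\ge1}\sigma(n)e^{-2\pi n y}.
\]
The right-hand side is continuous and strictly decreasing in $y>0$. It tends to $+\infty$ as $y\to0^+$ and to $0$ as $y\to\infty$. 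Hence there is exactly one positive solution $y$. We also record the size of $y$. For $y\to0$ the right-hand side is $\sim 1/(24y^2)$ (this follows from $E_2(-1/z)=z^2E_2(z)+6z/(\pi i)$), so $y\asymp\sqrt{t/N}$. Since $t<N$, $y$ is bounded above by an absolute constant.

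\textbf{Part (ii), local analysis.} Write $z=x+iy$ with this fixed $y$, and set
\[
f(z)=-2\pi i zM-t\log\eta(z),
\]
so that the integrand in \eqref{eq-integral} is $e^{f(z)}$. Since $f'(z)=-2\pi iM-t(\log\eta)'(z)$ and \eqref{eq-muk} gives $(\log\eta)'=-2\pi i\mu_1/z^2$, the condition \eqref{eq-1.1} is exactly $f'(iy)=0$. Likewise
\[
f''(iy)=-t(\log\eta)''(iy)=-\frac{2\pi t\,\mu_2(iy)}{y^3},
\]
and in general $f^{(m)}(iy)\ll t\,|\mu_m(iy)|\,y^{-m-1}$. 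I would bound $\mu_m(iy)\ll_m 1$ uniformly for $0<y\ll1$ using the transformation $\eta(-1/z)=\sqrt{-iz}\,\eta(z)$. Under it, the dominant part of $\log\eta$ is $-\tfrac12\log(-iz)-\pi i/(12z)$, and $z^{m+1}$ times its $m$-th derivative is $O(1)$. The same transformation shows $\mu_2(iy)\gg1$.

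On $|x|\le x_0:=y^{3/2}(t\mu_2)^{-1/2}\log t$, Taylor expand $f$ to fourth order:
\[
f(iy+x)=f(iy)-\frac{\pi t\mu_2}{y^3}x^2+c_3x^3+O\!\left(\frac{t x^4}{y^5}\right),\qquad c_3\ll \frac{t}{y^4}.
\]
Integrating the Gaussian over $\mathbb{R}$ gives the main term $e^{f(iy)}\,y^{3/2}/\sqrt{t\mu_2}$, and $e^{f(iy)}=\exp(2\pi My)\,a_t(iy)$, which is exactly the claimed main term. For the corrections, the odd cubic term integrates to zero. The quartic term and the square of the cubic term contribute relative errors of order
\[
\frac{t}{y^5}\left(\frac{y^3}{t}\right)^2=\frac yt,\qquad \frac{t^2}{y^8}\left(\frac{y^3}{t}\right)^3=\frac yt .
\]
The tails of the Gaussian beyond $x_0$ are $O(t^{-A})$, which is absorbed.

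\textbf{Part (ii), tails (main obstacle).} The step I expect to be hardest is showing that $x_0\le|x|\le\frac12$ contributes $O(y/t)$ relative to the main term, uniformly in $t$ and $y$. I would start from
\[
\log\left|\frac{a_t(x+iy)}{a_t(iy)}\right|=-t\sum_{n,k\ge1}\frac{1-\cos(2\pi nkx)}{k}\,e^{-2\pi nky}.
\]
For $|x|\le y$, compare $(1-\cos 2\pi nkx)\gg (nkx)^2$ summed over $nk\le 1/y$; this gives a decay of $\exp(-c\,t x^2/y^3)$, matching the Gaussian. For $y\le|x|\le\frac12$, apply the modular transformation to the nearest rational $x\approx h/k$. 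This shows $|\eta(x+iy)|^{-t}$ is smaller than $\eta(iy)^{-t}$ by a factor $\exp(-ct/y)$. Since $t/y\gg\log(1/y)$ in our range, this is negligible. If the rational-approximation argument becomes messy, the fallback is the cruder bound using only the term $k=1$:
\[
t\sum_{n}(1-\cos2\pi nx)e^{-2\pi ny}\gg t\,\min\!\left(\frac{x^2}{y^3},\frac1y\right),
\]
which suffices for the same conclusion.
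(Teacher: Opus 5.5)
\textbf{The step that fails.} The problem is your window $x_0=y^{3/2}(t\mu_2)^{-1/2}\log t$. For $t=1$ it is $0$, so the local region is empty. For any bounded $t$, the Gaussian mass beyond $x_0$ is a fixed fraction $\asymp e^{-c\log^2 t}$ of the main term $y^{3/2}/\sqrt{t\mu_2}$, and so is your bound $\int_{x_0\le|x|\le y}e^{-ctx^2/y^3}\,dx$. But you need $O(y/t)$, with $y/t\asymp(tN)^{-1/2}\to0$. In general $e^{-c\log^2 t}\ll y/t$ holds only when $\log t\gg\sqrt{\log N}$. So as written you lose the small-$t$ range, including the cases $t=1$ and fixed $t$ behind Corollaries~\ref{corollary-1.3} and~\ref{corollary-1.4}.

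\textbf{The repair.} Measure the cutoff against $t/y\asymp\sqrt{tN}$, which always tends to infinity. Take $x_0=y^{3/2}(t\mu_2)^{-1/2}\log(t/y)$. The tails are then $\exp(-c\log^2(t/y))=O(y/t)$. On the window, $|c_3|x_0^3\ll(y/t)^{1/2}\log^3(t/y)$ and $tx_0^4/y^5\ll(y/t)\log^4(t/y)$ are both $o(1)$, so expanding $\exp\bigl(c_3x^3+O(tx^4/y^5)\bigr)$ is still legitimate. Also $x_0=o(y)$, so the quartic remainder only needs $\mu_4$ bounded on a cone $|x|\le y/3$ (as in Lemma~\ref{lemma-mu234}), not merely at $iy$.

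\textbf{Comparison with the paper.} With that repair your proof is correct, and it takes a different route from the paper.

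The paper uses the fixed window $|x|<y/3$ and rescales $w=x/y$. It keeps the cubic and quartic terms inside the exponent via $|\mu_3/\mu_2|<6$ and $|\mu_4/\mu_2|<36$. It then evaluates the integral with Lemma~\ref{lemma-tyler}, obtaining $\vartheta^{-1/2}(1+\theta\,3.45/\vartheta)$ with $\vartheta=t\mu_2(iy)/y$. The complement $y/3\le|x|\le\frac12$ is handled by the modular bound of Lemma~\ref{proposition-error}. This gives explicit constants and no intermediate range. The price is the hypothesis $y\le 1/1000$ of Proposition~\ref{theorem-main}, which needs roughly $N/t\ge 4\cdot 10^4$. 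The condition $t<N$ does not guarantee this, and the paper's proof of (ii) does not check it.

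Your route is the textbook Laplace method. You use a shrinking window on which the odd term integrates to zero, plus an elementary $q$-series lower bound for $\log|\eta(x+iy)/\eta(iy)|$ elsewhere. You need neither Tyler's lemma nor the modular transformation, since your $k=1$ fallback already gives $e^{-ct/y}$ for $|x|\ge y$. Because the argument is purely $O$-based, it is not tied to a numerical threshold on $y$. It works on the whole bounded range of $y$, since $\mu_2(iy)=y^3\sum_n 2\pi n\sigma(n)e^{-2\pi ny}>0$ stays bounded below there.

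Your reduction of (i) to $N/t=\sum_n\sigma(n)e^{-2\pi ny}$ is also cleaner, and complete for all $y>0$. The paper's monotonicity argument instead relies on $\mu_2(iy)\ge 1/16$, which it proves only for $y\le 1/10$. Moreover, the derivative of $t\mu_1(iy)/y^2$ is actually $+t\mu_2(iy)/y^3$, not $-t\mu_2(iy)/y^3$: the function increases from $-\infty$ to $t/24$.
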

	Since $y$ is not given explicitly, we solve for $y$ in terms of $N$ and $t$ using Theorem~\ref{theorem-1.2}$(i)$ and substitute it in part~$(ii)$. This gives the following asymptotic formula for $p_t(N)$.
	\begin{theorem}\label{theorem-1.1}
		For any $\epsilon > 0$, let $t \ll N^{1 - \epsilon}$ be a positive integer. Then
		\begin{align*}
			p_t(N)=&\left(\frac{t}{24}\right)^{\frac{t+1}{4}}\frac{\exp\left(\frac{2\pi}{\sqrt{6}}\sqrt{t}\sqrt{N-\frac{t}{24}}\right)\exp\left(O\left(\frac{t^{\frac{3}{2}}}{\sqrt{N}}\right)\right)}{\sqrt{2}\left(N-\frac{t}{24}\right)^{\frac{t+3}{4}}}\left(1 + O\left(\frac{1}{\sqrt{Nt}}\right)\right). 
		\end{align*}
	\end{theorem}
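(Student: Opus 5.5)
We deduce Theorem~\ref{theorem-1.1} from Theorem~\ref{theorem-1.2} by solving \eqref{eq-1.1} asymptotically for $y$ and substituting into part (ii). The tool throughout is the modular transformation $\eta(-1/z)=\sqrt{z/i}\,\eta(z)$. At $z=iy$ it gives
\[
\log\eta(iy)=-\frac{\pi}{12y}-\frac12\log y+\log\prod_{n\ge1}\bigl(1-e^{-2\pi n/y}\bigr).
\]
The last term is $O(e^{-2\pi/y})$, and so are all its derivatives, since $y$ will turn out to be small. Hence $\mu_1(iy)$ and $\mu_2(iy)$ have explicit main terms plus exponentially small errors. Differentiating \eqref{eq-muk} with respect to $z$ and putting $z=iy$, I expect
\[
\mu_1(iy)=-\frac{1}{24}+\frac{y}{4\pi}+O\bigl(e^{-2\pi/y}\bigr).
\]
For $\mu_2$, the dominant term should be $\mu_2(iy)\approx \frac{1}{12iy}\cdot(\text{const})$, of order $1/y$, with a correction of relative size $y$; I will carry this out carefully, keeping the signs of the powers of $i$.

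\textbf{Step 1 (solving for $y$).} Substituting into \eqref{eq-1.1} gives $y^2M=\frac{t}{24}-\frac{ty}{4\pi}+O(te^{-2\pi/y})$. The leading solution is $y_0=\sqrt{t/(24M)}$, and $y_0\ll N^{-\epsilon/2}$ because $t\ll N^{1-\epsilon}$. This justifies dropping the exponentially small terms. Solving the quadratic gives
\[
y=y_0-\frac{t}{8\pi M}+O\!\left(\frac{t^{3/2}}{M^{3/2}}\right).
\]

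\textbf{Step 2 (the exponent).} Next I compute
\[
\log\bigl(\exp(2\pi My)\,a_t(iy)\bigr)=2\pi My+\frac{\pi t}{12y}+\frac t2\log y+O\bigl(te^{-2\pi/y}\bigr).
\]
The function $2\pi My+\frac{\pi t}{12 y}$ is stationary at $y_0$, where it equals $\frac{2\pi}{\sqrt6}\sqrt{tM}$. The displacement $y-y_0=O(t/M)$ therefore affects it only at second order. The second derivative is $\asymp t/y_0^3$, so this error is
\[
O\!\left(\frac{t}{y_0^3}\cdot\frac{t^2}{M^2}\right)=O\!\left(\frac{t^{3/2}}{\sqrt N}\right),
\]
which is exactly the $\exp(O(t^{3/2}/\sqrt N))$ factor in the statement.

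The term $\frac t2\log y$ is handled by writing $y=y_0\bigl(1-\frac{\sqrt{3t}}{\pi\sqrt{2M}}\cdot\text{const}+\dots\bigr)$. It yields $\frac t2\log y_0=\frac t4\log\frac{t}{24M}$ plus an error of size $t\cdot\sqrt{t/M}=t^{3/2}/\sqrt M$, which is again absorbed into the same factor.

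\textbf{Step 3 (the prefactor).} The prefactor $y^{3/2}/\sqrt{t\mu_2(iy)}$ contributes, using $\mu_2\asymp 1/y$, the quantity $y^2/\sqrt t$ up to a constant. With $y\approx y_0$ this is
\[
\frac{t^{1/2}}{24M}\cdot\text{const}.
\]
Combining with Step 2 gives the powers $(t/24)^{(t+1)/4}M^{-(t+3)/4}$ and the constant $1/\sqrt2$. I will check that the numerical constant comes out as $1/\sqrt2$.

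\textbf{Step 4 (error terms).} The relative error $y/t$ from Theorem~\ref{theorem-1.2}(ii) is $\asymp 1/\sqrt{tM}$, which matches $O(1/\sqrt{Nt})$. The relative errors in the prefactor from $y-y_0$ are $O(\sqrt{t/M})$. These are not of size $1/\sqrt{Nt}$, but $\sqrt{t/N}\le t^{3/2}/\sqrt N$, so they can be absorbed into the $\exp(O(t^{3/2}/\sqrt N))$ factor.

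\textbf{Main obstacle.} I expect the hardest part to be the bookkeeping in Steps 1–2: getting the exact main terms of $\mu_1$ and $\mu_2$ from the transformation law, and verifying that the shift $y-y_0$ contributes only within the stated error terms. The key point there is that first-order effects vanish at the stationary point $y_0$, leaving only second-order contributions.
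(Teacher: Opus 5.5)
Your route is the paper's. You solve the small-$y$ quadratic $y^2M+\frac{ty}{4\pi}-\frac{t}{24}+O(te^{-2\pi/y})=0$, feed the transformation law for $\eta(iy)$ into Theorem~\ref{theorem-1.2}(ii), and absorb the $O(\sqrt{t/N})$ relative corrections and the $O(t^{3/2}/\sqrt N)$ shifts of the exponent into $\exp(O(t^{3/2}/\sqrt N))$. Steps 1, 2 and 4 are fine. Your stationarity argument for $2\pi My+\frac{\pi t}{12y}$ at $y_0$ is a tidier way to see the cancellation of the linear terms, which the paper checks by expanding $y$ and $y^{-1}$ with $C_1=-\frac{1}{8\pi}$.

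Step 3 fails as written, because $\mu_2(iy)$ is not of order $1/y$. From $\log\eta(z)=-\frac{\pi i}{12z}-\frac12\log(-iz)+(\text{exp. small})$ you get $\frac{d^2}{dz^2}\log\eta(z)=-\frac{\pi i}{6z^3}+\frac{1}{2z^2}+\cdots$. Multiplying by $-\frac{z^3}{2\pi i}$ as in \eqref{eq-muk} gives
\[
\mu_2(iy)=\frac{1}{12}-\frac{y}{4\pi}+O\bigl(y^{-1}e^{-2\pi/y}\bigr),
\]
which is bounded above and below by absolute constants (cf.\ \eqref{eq-mu2}).

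This is not a constant-factor slip. With $\mu_2\asymp 1/y$ your prefactor is $\asymp y^2/\sqrt t$. Together with $y^{t/2}$ from Step 2 it produces $y_0^{(t+4)/2}t^{-1/2}\asymp (t/24)^{(t+2)/4}M^{-(t+4)/4}$. That is off from the target by a factor $\asymp (t/M)^{1/4}$, which neither error term can absorb. So the claim that Step 3 yields $(t/24)^{(t+1)/4}M^{-(t+3)/4}$ does not follow from what you wrote. Already at $t=1$ it would give $N^{-5/4}$ instead of Rademacher's $N^{-1}$.

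With the correct value the step closes immediately. You have $\sqrt{t\mu_2(iy)}=\sqrt{t/12}\,\bigl(1+O(\sqrt{t/N})\bigr)$, and
\[
\sqrt{\frac{12}{t}}\,y_0^{\frac{t+3}{2}}=\sqrt{\frac{12}{t}}\left(\frac{t}{24}\right)^{\frac12}\left(\frac{t}{24}\right)^{\frac{t+1}{4}}M^{-\frac{t+3}{4}}=\frac{1}{\sqrt2}\left(\frac{t}{24}\right)^{\frac{t+1}{4}}M^{-\frac{t+3}{4}},
\]
which gives the constant $1/\sqrt2$ exactly. The $O(\sqrt{t/N})$ relative error coming from $\mu_2$ is absorbed just like the other prefactor errors in your Step 4. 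This is how the paper finishes, using $\mu_2(iy)=\frac{1}{12}\bigl(1+O(\sqrt{t/N})\bigr)$ from \eqref{eq-mub}.
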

	
	In Theorem \ref{theorem-1.1}, if we set $t = 1$, we obtain Rademacher's asymptotic formula \cite{rademacher1938partition} for $p(N)$.
	\begin{corollary}\label{corollary-1.3}
		Let $N$ be a large positive integer. Then  
		\begin{equation*}
			p(N)=\frac{1}{4\sqrt{3}N}\exp\left(\frac{2\pi}{\sqrt{6}}\sqrt{N}\right)\left(1+O\left(N^{-\frac{1}{2}}\right)\right).
		\end{equation*}
	\end{corollary}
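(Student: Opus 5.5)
We obtain the corollary by specializing Theorem~\ref{theorem-1.1} to $t=1$, which is allowed because $1 \ll N^{1-\epsilon}$ holds, for instance, with $\epsilon = 1/2$. With $t=1$ we have $\frac{t+1}{4}=\frac12$ and $\frac{t+3}{4}=1$. Also $p_1(N)=p(N)$, since the generating function is then $\prod_{n\ge1}(1-q^n)^{-1}$. Theorem~\ref{theorem-1.1} therefore gives
\begin{align*}
p(N)=\left(\frac{1}{24}\right)^{\frac12}\frac{\exp\left(\frac{2\pi}{\sqrt6}\sqrt{N-\frac{1}{24}}\right)\exp\left(O\left(N^{-\frac12}\right)\right)}{\sqrt2\left(N-\frac{1}{24}\right)}\left(1+O\left(N^{-\frac12}\right)\right).
\end{align*}

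It remains to simplify the factors one at a time. The constant is $\left(\frac{1}{24}\right)^{1/2}\frac{1}{\sqrt2}=\frac{1}{\sqrt{48}}=\frac{1}{4\sqrt3}$. For the exponential, we expand
\begin{align*}
\sqrt{N-\tfrac{1}{24}}=\sqrt N\left(1-\tfrac{1}{24N}\right)^{1/2}=\sqrt N+O\left(N^{-1/2}\right).
\end{align*}
Hence $\exp\left(\frac{2\pi}{\sqrt6}\sqrt{N-\frac1{24}}\right)=\exp\left(\frac{2\pi}{\sqrt6}\sqrt N\right)\left(1+O(N^{-1/2})\right)$. This uses $e^{O(u)}=1+O(u)$ for bounded $u$, and the same fact turns $\exp(O(N^{-1/2}))$ into $1+O(N^{-1/2})$. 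Finally, $\left(N-\frac1{24}\right)^{-1}=N^{-1}\left(1+O(N^{-1})\right)$.

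Multiplying all the $(1+O(\cdot))$ factors together, each of which is at worst $1+O(N^{-1/2})$, gives
\begin{align*}
p(N)=\frac{1}{4\sqrt3 N}\exp\left(\frac{2\pi}{\sqrt6}\sqrt N\right)\left(1+O\left(N^{-\frac12}\right)\right).
\end{align*}
There is no real obstacle here. The one point needing care is the shift $N\mapsto N-\frac{1}{24}$ inside the exponential: it contributes an error only of size $O(N^{-1/2})$ in the exponent, which is absorbed into the stated error term rather than changing the main term.
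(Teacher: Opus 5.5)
Your proposal is correct and follows the paper's route exactly: the paper obtains the corollary by setting $t=1$ in Theorem~\ref{theorem-1.1}. You also carry out the resulting simplifications correctly: the constant $(1/24)^{1/2}/\sqrt{2}=1/(4\sqrt{3})$, and absorbing both the shift $N\mapsto N-\frac{1}{24}$ and the factor $\exp(O(N^{-1/2}))$ into the $1+O(N^{-1/2})$ error.
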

	In Theorem \ref{theorem-1.1}, if we take $t$ to be any fixed positive integer, then we obtain the following corollary, including the error term. This result coincides with Murty's result \cite{murty2013partition} for $p_t(N)$.
	\begin{corollary}[{\cite[Theorem 4]{murty2013partition}}]\label{corollary-1.4}
		Let $t$ be any fixed
		positive integer. Then
		\begin{equation*}
			p_{t}(N)=\left(\frac{t}{24}\right)^{\frac{t+1}{4}}\frac{\exp\left(\frac{2\pi}{\sqrt{6}}\sqrt{Nt}\right)}{\sqrt{2}N^{\frac{t+3}{4}}}\left(1+O\left(N^{-\frac{1}{2}}\right)\right).
		\end{equation*}  
	\end{corollary}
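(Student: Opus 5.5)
The plan is to derive the corollary directly from Theorem~\ref{theorem-1.1}. With $t$ fixed, every factor of Theorem~\ref{theorem-1.1} that depends on $N-\frac{t}{24}$ is replaced by the corresponding expression in $N$, and each replacement costs only a factor $1+O(N^{-1/2})$. No new analytic input beyond Theorem~\ref{theorem-1.1} is needed.

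First, a fixed $t$ certainly satisfies $t\ll N^{1-\epsilon}$, so Theorem~\ref{theorem-1.1} applies. Since $t$ is fixed, its two error factors become
\begin{align*}
\exp\left(O\left(\frac{t^{3/2}}{\sqrt{N}}\right)\right)=1+O\left(N^{-\frac12}\right)
\quad\text{and}\quad
1+O\left(\frac{1}{\sqrt{Nt}}\right)=1+O\left(N^{-\frac12}\right),
\end{align*}
where the implied constants may depend on $t$. The first identity uses $e^{u}=1+O(u)$ for bounded $u$.

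Next, I would expand the square root by Taylor's theorem:
\begin{align*}
\sqrt{N-\frac{t}{24}}=\sqrt{N}-\frac{t}{48\sqrt{N}}+O\left(N^{-\frac32}\right).
\end{align*}
Consequently
\begin{align*}
\exp\left(\frac{2\pi}{\sqrt6}\sqrt t\sqrt{N-\frac{t}{24}}\right)=\exp\left(\frac{2\pi}{\sqrt6}\sqrt{Nt}\right)\exp\left(O\left(\frac{t^{3/2}}{\sqrt N}\right)\right)=\exp\left(\frac{2\pi}{\sqrt6}\sqrt{Nt}\right)\left(1+O\left(N^{-\frac12}\right)\right).
\end{align*}
For the power of $N$, I would write
\begin{align*}
\left(N-\frac{t}{24}\right)^{-\frac{t+3}{4}}=N^{-\frac{t+3}{4}}\left(1-\frac{t}{24N}\right)^{-\frac{t+3}{4}}=N^{-\frac{t+3}{4}}\left(1+O\left(N^{-1}\right)\right).
\end{align*}

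Multiplying these finitely many factors of the form $1+O(N^{-1/2})$ gives a single factor $1+O(N^{-1/2})$. This yields exactly the stated formula, with the constant $\left(\frac{t}{24}\right)^{\frac{t+1}{4}}/\sqrt2$ unchanged.

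There is no real obstacle in this argument. The only point needing care is that the shift from $N-\frac{t}{24}$ to $N$ inside the exponential produces a term of size $t^{3/2}/\sqrt N$. This is not negligible for growing $t$, but for fixed $t$ it is $O(N^{-1/2})$. It therefore merges with the error terms already present.
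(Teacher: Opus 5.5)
Your proposal is correct and follows the paper's route: the paper obtains the corollary simply by specializing Theorem~\ref{theorem-1.1} to fixed $t$. You additionally make explicit the step the paper leaves unstated, namely replacing $N-\frac{t}{24}$ by $N$ in the exponential and in the power of $N$, each at a cost of a factor $1+O\left(N^{-1/2}\right)$.
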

	We may compare Corollary~\ref{corollary-1.4} with Meinardus's formula \cite{meinardus1953}:
	\begin{align*}
		p_t(N)=\frac{\exp\left(D'(0)\right)\left(t\Gamma(2)\zeta(2)\right)^{\frac{1-2D(0)}{4}}\exp\left(2\sqrt{Nt\Gamma(2)\zeta(2)}\right)}{\sqrt{4\pi}\,N^{\frac{3-2D(0)}{4}}} \left(1+O\left(N^{-\frac{1}{2}}\right)\right), 
	\end{align*}
	where $D(s)=t\zeta(s)$, and $\zeta(s)$ is the Riemann zeta function. This gives an alternative proof of special values of $\zeta(s)$: 
	$$
	\zeta(0)=-\frac{1}{2}, \qquad
	\zeta'(0)=-\frac{1}{2}\log(2\pi), \qquad
	\zeta(2)=\frac{\pi^2}{6}.
	$$ 
	

	\subsection{\texorpdfstring{Sketch of the proof of Theorem \ref{theorem-1.2}}{}} 
	We use the integral in (\ref{eq-integral}) as a basis for our saddle point analysis. 
	Note that  $y=\Im (z)$ only appears on the right-hand side of (\ref{eq-integral}).
	Hence, we select $y$ appropriately as in (\ref{eq-1.1}) to obtain an asymptotic formula for $p_t(N)$. 
	The Taylor series of $a_t(z)=\frac{1}{\eta(z)^{t}}$ has radius of convergence $<y$, so we divide the integral in (\ref{eq-integral}) into two separate parts:
	\begin{align*}
		|x| < \frac{y}{3} 
		\qquad \text{and} \qquad 
		\frac{y}{3} \le |x| \le \frac{1}{2}.
	\end{align*}
	The integral in (\ref{eq-integral}) may be decomposed as
	\begin{equation}
		\begin{aligned}\label{eq-ptN1}
			p_{t}(N)&=\exp(2\pi My)a_{t}(iy)\int_{-y/3}^{y/3}\exp\left(-2\pi i Mx+2\pi i\frac{1}{2\pi i}\log\frac{a_{t}(z)}{a_{t}(iy)}\right)dx\\
			&+\exp(2\pi My)a_{t}(iy)\int_{\frac{y}{3}\le |x|\le \frac{1}{2}}\exp\left(-2\pi i Mx\right)\frac{a_{t}(z)}{a_{t}(iy)}dx.
		\end{aligned}
	\end{equation}   
	The Taylor expansion of $\log a_t(z)$ around $x=0$ is given by
	\begin{align}\label{eq-tyler1}
		\log a_{t}(z)&=\left(\log a_{t}(iy)+x\frac{d}{dz} \log a_{t}(iy)+\frac{x^{2}}{2!}\frac{d^{2}}{dz^{2}}\log a_{t}(iy)+\cdots\right).
	\end{align}
	From the definition of $\mu_m$, we have
	\begin{equation*}
		\left(\frac{d}{dz}\right)^{m}\log a_{t}(z)=t\frac{2\pi i}{z^{m+1}}\mu_{m}(z).
	\end{equation*}
	So, (\ref{eq-tyler1}) simplifies to
	\begin{align}\label{eq-taylor}
		\frac{1}{2\pi i}\log \frac{a_{t}(z)}{a_{t}(iy)}
		&=t\left(x\frac{\mu_{1}(iy)}{(iy)^{2}}+\frac{x^{2}}{2}\frac{\mu_{2}(iy)}{(iy)^{3}}+\frac{x^{3}}{6}\frac{\mu_{3}(iy)}{(iy)^{4}}+\frac{x^{4}}{24}\frac{\mu_{4}(x^{\prime}+iy)}{(x^{\prime}+iy)^{5}}\right),
	\end{align}
	for some $z^{\prime}=x^{\prime}+iy$, where $x^{\prime}$ is between $0$ and $x$. In the region $|x|<\frac{y}{3}$, we evaluate the integral using the Taylor expansion described above. However, this expansion is not valid in the region $\frac{y}{3} \le |x| \le \frac{1}{2}$. Instead, we establish Lemma \ref{proposition-error}, where we derive an upper bound. Combining these results, Proposition \ref{theorem-main} shows that $|x|<\frac{y}{3}$ contributes to the main term, and $\frac{y}{3}\le |x|\le \frac{1}{2}$ contributes to the error in (\ref{eq-ptN1}).

	\section{Acknowledgments}
	J. Barman is deeply thankful to the University Grants Commission (UGC), India, for their invaluable support through the Fellowship Programme. K. Mahatab is supported by the ARG-MATRICS Programme (grant no. ANRF/ARGM/2025/002540/MTR).
	\section{Preliminaries}
	In this section, we establish several preliminary results that are required to prove the main results of this paper. 	
	For small values of $y > 0$, we apply the functional equation for the Dedekind eta function in the following form.
	
	\begin{lemma}[{\cite[Lemma 3.4]{barman2025lower}}]\label{lemma-2.2}
		Let $0<y<\frac{\sqrt{3}}{2}$, and for each $y$, there exist a $v$ satisfying $1<v<1.01$ such that
		\begin{align*}
			\eta(iy) = y^{-\frac{1}{2}} \exp\left( -\frac{\pi}{12y} - v e^{-\frac{2\pi}{y}} \right).
		\end{align*}
	\end{lemma}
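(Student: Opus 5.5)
The plan is to reduce everything to the modular transformation law of the Dedekind eta function, $\eta(-1/z)=\sqrt{-iz}\,\eta(z)$, and then control the infinite product at the transformed point by a rapidly convergent series.

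First, take $z=iy$ with $y>0$. Then $-1/z=i/y$ and $\sqrt{-iz}=\sqrt{y}$, so the transformation law gives $\eta(i/y)=y^{1/2}\eta(iy)$, that is, $\eta(iy)=y^{-1/2}\eta(i/y)$. From the product definition of $\eta$,
\begin{equation*}
\eta(i/y)=\exp\left(-\frac{\pi}{12y}\right)\prod_{n=1}^{\infty}\left(1-q^{n}\right),\qquad q=e^{-2\pi/y}.
\end{equation*}
It therefore suffices to write $\prod_{n\ge1}(1-q^n)=\exp(-vq)$ and show that $1<v<1.01$. Since $0<q<1$, the product is a positive real number in $(0,1)$, so $v$ is well defined by
\begin{equation*}
v=-\frac{1}{q}\sum_{n\ge1}\log(1-q^n).
\end{equation*}

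Next, expand each logarithm:
\begin{equation*}
-\sum_{n\ge1}\log(1-q^n)=\sum_{n\ge1}\sum_{k\ge1}\frac{q^{nk}}{k}=\sum_{m\ge1}\sigma_{-1}(m)\,q^{m},
\end{equation*}
where $\sigma_{-1}(m)=\sum_{d\mid m}1/d$. Hence $v=1+\sum_{m\ge2}\sigma_{-1}(m)q^{m-1}$. All terms in the tail are positive, so $v>1$.

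For the upper bound, use $\sigma_{-1}(m)\le m$, which gives
\begin{equation*}
v-1\le\sum_{m\ge2}m\,q^{m-1}\le\frac{2q}{(1-q)^2}.
\end{equation*}
The hypothesis $y<\sqrt{3}/2$ implies $q<e^{-4\pi/\sqrt{3}}<10^{-3}$. Consequently $v-1<2.01\cdot 10^{-3}<0.01$.

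There is no real obstacle here. The only points needing care are choosing the correct branch of $\sqrt{-iz}$, which is positive for $z=iy$, and noting that $v$ depends on $y$, which the statement permits ("for each $y$ there exists $v$"). The numerical bound on $q$ is comfortable: any $y<1$ would already suffice.
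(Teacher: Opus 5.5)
Your proof is correct. The inversion $\eta(iy)=y^{-1/2}\eta(i/y)$, the expansion $-\sum_{n\ge1}\log(1-q^n)=\sum_{m\ge1}\sigma_{-1}(m)q^m$ giving $v=1+\sum_{m\ge2}\sigma_{-1}(m)q^{m-1}>1$, and the tail bound $v-1\le 2q/(1-q)^2$ with $q=e^{-2\pi/y}<e^{-4\pi/\sqrt{3}}<10^{-3}$ all check out. The paper itself gives no proof and simply quotes this lemma from its earlier reference; your argument (modular transformation of $\eta$ followed by a $q$-series estimate) is the standard route to such a statement.
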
	
	The following lemma helps us to compute the integral in (\ref{eq-ptN1}) in the region $\frac{y}{3} \le |x| \le \frac{1}{2}$.
	\begin{lemma}\label{proposition-error}
		If $y\le \frac{1}{1000}$, then 
		\begin{align*}
			\int_{\frac{y}{3}\le |x|\le \frac{1}{2}}\left|\frac{a_t(z)}{a_t(iy)}\right|dx\ &\le (1.05)^{t} \exp\left(-\frac{\pi t}{120y}\right).
		\end{align*}
	\end{lemma}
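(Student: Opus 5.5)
The plan is to reduce the statement to a pointwise bound on $|\eta(iy)/\eta(z)|$ for $y/3\le|x|\le 1/2$, raise it to the power $t$, and integrate. Since $a_t(z)/a_t(iy)=\left(\eta(iy)/\eta(z)\right)^t$, it suffices to show
\[
\left|\frac{\eta(iy)}{\eta(z)}\right|\le 1.05^{\,\cdot}\exp\left(-\frac{\pi}{120y}\right)
\]
up to a harmless factor, uniformly on that range. The interval has length at most $1$, so integrating the $t$-th power loses nothing.

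We work with the product $\prod_{n\ge1}|1-q^n|^{-1}$, $q=e^{2\pi i z}$, where the prefactor $e^{\pi i z/12}$ has modulus $e^{-\pi y/12}$, the same at $z$ and $iy$. Taking logarithms gives
\[
\log\left|\frac{\eta(iy)}{\eta(z)}\right| =\sum_{n\ge1}\left(\log|1-e^{-2\pi n y}|-\log|1-e^{2\pi i n z}|\right) =-\sum_{m\ge1}\frac{e^{-2\pi m y}}{m}\left(1-\cos(2\pi m x)\right),
\]
where we expand $-\log|1-w|=\operatorname{Re}\sum_k w^k/k$ and sum over $n$. Exchanging the sums, this equals $-\operatorname{Re}\sum_{k}\frac1k\left(\frac{e^{-2\pi ky}}{1-e^{-2\pi k y}}-\frac{e^{2\pi i k z}}{1-e^{2\pi i k z}}\right)$.

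The cleanest route is to keep only the $k=1$ term as the main gain and show the other terms give no loss. For each $k$ the term is nonnegative, since $\left|\frac{w}{1-w}\right|\le \frac{|w|}{1-|w|}$; more precisely, $\operatorname{Re}\frac{w}{1-w}\le\frac{|w|}{1-|w|}$. So
\[
\log|\cdot|\le -\left(\frac{r}{1-r}-\operatorname{Re}\frac{re^{2\pi i x}}{1-re^{2\pi ix}}\right), \qquad r=e^{-2\pi y}.
\]
Write $\frac{r}{1-r}-\operatorname{Re}\frac{re^{i\theta}}{1-re^{i\theta}}$ with $\theta=2\pi x$. It equals $\frac{r(1-\cos\theta)(1+r)}{(1-r)\,|1-re^{i\theta}|^2}$ after simplification.

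For $y/3\le|x|\le1/2$ and $y\le1/1000$, we have $1-r\approx 2\pi y$. The quantity $|1-re^{i\theta}|^2=(1-r)^2+2r(1-\cos\theta)$. Then:

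\begin{itemize}
\item When $1-\cos\theta\gtrsim (1-r)^2$, which holds for $|x|\ge y/3$, the expression is $\ge \frac{(1+r)\,r(1-\cos\theta)}{(1-r)\left((1-r)^2+2r(1-\cos\theta)\right)}$.
\item This is increasing in $1-\cos\theta$, so its minimum is at $|x|=y/3$.
\item There $1-\cos\theta\approx 2\pi^2y^2/9$, and the expression is $\approx \frac{2}{2\pi y}\cdot\frac{2\pi^2y^2/9}{4\pi^2y^2+4\pi^2y^2/9}=\frac{1}{\pi y}\cdot\frac{1}{20}$.
\end{itemize}

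This gives $1/(20\pi y)\ge \pi/(120y)$ with room to spare, since $1/(20\pi)\approx0.0159>\pi/120\approx0.026$?

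That comparison is false, so the $k=1$ term alone is not enough and more terms are needed. The main obstacle is therefore summing the contributions for $k\le K\approx 1/(6|x|)$-type ranges, where $k\theta$ stays small. Alternatively, we can use the modular transformation $z\mapsto -1/z$ near $x$ small, combined with Lemma~\ref{lemma-2.2}: for $y/3\le|x|\lesssim\sqrt y$ one has $|\eta(z)|\approx|z|^{-1/2}\exp\left(-\frac{\pi}{12}\operatorname{Im}(-1/z)\right)$ with $\operatorname{Im}(-1/z)=y/|z|^2\le \frac{9}{10y}$.

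This gives the ratio $\le (|z|/y)^{1/2}\exp\left(-\frac{\pi}{12y}(1-\tfrac{9}{10})\right)=(|z|/y)^{1/2}e^{-\pi/(120y)}$, which is exactly the constant $120$. This confirms the intended route. The factor $(|z|/y)^{1/2}$ and the exponentially small corrections from Lemma~\ref{lemma-2.2} account for $1.05^t$ after the small-$x$ region is handled. The large-$|x|$ region is then handled by the summed series estimate above, where $k$ up to $\asymp 1/y$ each contribute a gain.
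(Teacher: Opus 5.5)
Your proposal does not yet prove the lemma; there are two gaps.

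\textbf{The small-$|x|$ range.} Your $z\mapsto -1/z$ computation is the right mechanism; it is exactly the case $c=1$, $d=0$ of the paper's argument. However, the claim that the prefactor $(|z|/y)^{1/2}$ is absorbed into the constant $1.05$ on $y/3\le|x|\lesssim\sqrt{y}$ is false: at $|x|\asymp\sqrt{y}$ it has size $y^{-1/4}$. What rescues you is that the exponent improves at the same time. With $\alpha=y^2/|z|^2$ your bound reads $\alpha^{-1/4}\exp\bigl(-\frac{\pi}{12y}(1-\alpha)\bigr)$. Its logarithm is convex in $\alpha$, so the maximum over the admissible range is at an endpoint. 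For $y\le 1/1000$ that endpoint is $\alpha=9/10$, giving $(10/9)^{1/4}e^{-\pi/(120y)}$; this is the same optimisation the paper performs with $\alpha=y\Im\gamma z$. You also need to make ``$\approx$'' explicit via $|\eta(w)|\ge e^{-\pi\Im w/12}\prod_{n\ge1}(1-e^{-2\pi n\Im w})$ at $w=-1/z$. That correction is harmless only while $\Im w$ stays bounded below.

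\textbf{The large-$|x|$ range.} This is the more serious gap: it is not handled at all. ``$k$ up to $\asymp 1/y$ each contribute a gain'' is not an estimate, and you yourself call it the main obstacle. The paper avoids the series entirely. For each $z$ it takes $\gamma\in\mathrm{SL}_2(\mathbb{Z})$ with lower row $(c,d)$ and $\Im\gamma z\ge\sqrt{3}/2$, and uses the transformation law $|\eta(z)|=|cz+d|^{-1/2}|\eta(\gamma z)|$ with $|cz+d|^2=y/\Im\gamma z$. It then observes that $\alpha=y\Im\gamma z\le 9/10$ in every case: $\alpha\le 1/c^2$ for $c\ge2$, and $\alpha=y^2/(x^2+y^2)\le 9/10$ for $c=1$, $d=0$ because $|x|\ge y/3$. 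So a single bound covers all of $y/3\le|x|\le1/2$.

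Within your own framework the repair is short, and you already had the ingredients:
\begin{enumerate}
\item You showed the $k=1$ term is increasing in $1-\cos2\pi x$.
\item Since $1-\cos2\pi x=2\sin^2(\pi x)\ge 8x^2$ for $|x|\le 1/2$, for $|x|\ge y$ that term is at least $\frac{r(1+r)}{2\pi y}\cdot\frac{2}{\pi^2+4}\approx \frac{0.045}{y}>\frac{\pi}{120y}$.
\item Split at $|x|=y$. Use $z\mapsto -1/z$ on $y/3\le|x|\le y$, where $\alpha\in[1/2,9/10]$ and $\Im(-1/z)\ge 1/(2y)\ge 500$. Use the $k=1$ term alone on $y\le|x|\le 1/2$.
\end{enumerate}
This would give a complete proof that avoids reduction to the fundamental domain.

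A minor point: your first series should read $-\sum_m\frac{\sigma(m)}{m}e^{-2\pi my}(1-\cos2\pi mx)$. The $k$-indexed form you then use is correct.
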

	\begin{proof}
		From the definition of $a_t(z)$,
		\begin{align}\label{eq-atzaty}
			\left| \frac{a_t(z)}{a_t(iy)} \right| = \left| \frac{\eta(iy)}{\eta(z)} \right|^{t}.
		\end{align}
		We now show that the above expression is sufficiently small for $\frac{y}{3}\le |x|\le \frac{1}{2}$ and $y\le \frac{1}{1000}$.
		Let $\gamma= \begin{pmatrix}
			a & b\\
			c & d
		\end{pmatrix}\in \text{SL}_2(\mathbb{Z}) $ and $\Im \gamma z\ge \frac{\sqrt{3}}{2}$.
		Proceeding as in the proof of Proposition~3.1 of \cite{barman2026asymptotic} up to equation~(3.3), we obtain
		\begin{align*}
			\left|\frac{\eta(iy)}{\eta(z)}\right|&\le \left(y\Im \gamma z\right)^{-\frac{1}{4}}\exp\left(-\frac{\pi}{12y}+\frac{\pi}{12y}y\Im \gamma z+\frac{1}{228}\right)\\
			&=:\alpha^{-\frac{1}{4}}\exp\left(\frac{\pi\alpha}{12y}\right)\exp\left(-\frac{\pi}{12y}+\frac{1}{228}\right),
		\end{align*}
		where $\alpha=y\Im \gamma z=\frac{y^{2}}{(cx+d)^{2}+c^{2}y^{2}}$. The function $\alpha^{-\frac{1}{4}}\exp\left(\frac{\pi\alpha}{12y}\right)$ reaches its maximum when $\alpha$ is maximum. Note that  $\alpha<\frac{y^{2}}{c^{2}y^{2}}\le \frac{1}{4}$, for $c\ge 2$. When $c = 1$ and $d = 0$, we have $\alpha=\frac{y^{2}}{x^{2}+y^{2}}\le\frac{9}{10}$, since $|x| \ge \frac{y}{3}$. On the other hand, since $\Im \gamma z \ge \frac{\sqrt{3}}{2}$, it follows that $\alpha \in\left[\frac{\sqrt{3}}{2}y, \frac{9}{10}\right]$. Therefore,
		\begin{align*}
			\left|\frac{\eta(iy)}{\eta(z)}\right|&\le\left(\frac{10}{9}\right)^{\frac{1}{4}}\exp\left(-\frac{\pi}{120y}+\frac{1}{228}\right)  \\
			&\le 1.05 \exp\left(-\frac{\pi}{120y}\right).
		\end{align*}
		Using the above estimate in \eqref{eq-atzaty}, the proof follows immediately.
	\end{proof}
		The following lemma will be used in the proof of Proposition~\ref{theorem-main}.
	\begin{lemma}[{\cite[Lemma 4.1]{tyler2026asymptotics}}]\label{lemma-tyler}
		Suppose $\vartheta>38$, $|\rho|<\frac{2}{25}$, $\xi \in(-1,1)$ and $|\theta|\le 1$. Then,
		\begin{align*}
			\int_{-1/3}^{1/3}\exp\left(2\pi i\rho w\right)\exp\left(-\pi\vartheta w^{2}\left(1+2i\xi w+\theta 3w^{2}\right)\right)dw&=\frac{1}{\sqrt{\vartheta}}\left(1+\theta\frac{3.45}{\vartheta}\right).
		\end{align*}
	\end{lemma}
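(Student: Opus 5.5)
The plan is to reduce the integral to a Gaussian integral plus explicitly bounded corrections, using the decay coming from $\vartheta>38$. Throughout I read $\xi$ as a fixed constant and allow $\theta=\theta(w)$ to vary with $|\theta|\le 1$, as it does when the lemma is applied to a Taylor remainder. The $\theta$ in the conclusion is a new generic quantity of modulus at most $1$.

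First I will control the modulus of the integrand. On $|w|\le \frac13$ we have $3|\theta| w^{2}\le \frac13$, so
\[
\Re\bigl(w^{2}(1+2i\xi w+3\theta w^{2})\bigr)\ge \tfrac{2}{3}w^{2}.
\]
Hence the integrand is bounded by $\exp(-\tfrac{2}{3}\pi\vartheta w^{2})$, which gives Gaussian decay throughout the range. Next I write
\[
\exp\bigl(-\pi\vartheta(2i\xi w^{3}+3\theta w^{4})\bigr)=1-2\pi i\vartheta\xi w^{3}+R(w).
\]
Using $|e^{u}-1-u|\le \tfrac12|u|^{2}e^{|u|}$, together with the bound $\tfrac12|u|^2\ll\vartheta^{2}w^{6}$ for the $w^3$ part and a direct bound $\ll \vartheta w^{4}$ for the $w^{4}$ part, I get
\[
|R(w)|\le C_1\vartheta w^{4}+C_2\vartheta^{2}w^{6}
\]
up to factors that are absorbed into the Gaussian weight $e^{-\frac{2}{3}\pi\vartheta w^{2}}$ of the previous step.

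Second, I evaluate the main term. I extend $\int_{-1/3}^{1/3}e^{2\pi i\rho w-\pi\vartheta w^{2}}\,dw$ to all of $\mathbb{R}$, which gives exactly $\vartheta^{-1/2}e^{-\pi\rho^{2}/\vartheta}$. The tail $|w|>\frac13$ costs at most $\ll \vartheta^{-1}e^{-\pi\vartheta/9}$, which is negligible against $\vartheta^{-3/2}$ since $\vartheta>38$. Because $|\rho|<\frac{2}{25}$, the factor $e^{-\pi\rho^{2}/\vartheta}$ equals $1+O(0.021/\vartheta)$.

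The cubic term is odd in $w$ and therefore integrates to zero against the even Gaussian part. It survives only through $\sin(2\pi\rho w)$, and $|\sin(2\pi\rho w)|\le 2\pi|\rho||w|$, so its contribution is bounded by
\[
2\pi\vartheta|\xi|\cdot 2\pi|\rho|\int w^{4}e^{-\pi\vartheta w^{2}}\,dw\ll |\rho|\vartheta^{-3/2}.
\]
Finally, the remainder $R$ contributes $\ll \vartheta\int w^{4}e^{-\frac23\pi\vartheta w^{2}}+\vartheta^{2}\int w^{6}e^{-\frac23\pi\vartheta w^{2}}\ll\vartheta^{-3/2}$ by the standard Gaussian moments.

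Collecting the three pieces gives $\vartheta^{-1/2}(1+O(1/\vartheta))$. The main obstacle is not the structure of the argument but obtaining the explicit constant $3.45$. To get it I must keep exact Gaussian moments rather than $\ll$ bounds and track the factor $e^{|u|}$ carefully. For that I would bound $|u|\le \pi\vartheta w^{2}(2|w|+3w^{2})$ and absorb $e^{|u|}$ into the weight $e^{-\pi\vartheta w^2}$, which weakens the effective Gaussian constant from $\pi\vartheta$ to something like $\tfrac{2}{3}\pi\vartheta$ on $|w|\le \frac13$. If the bounds are too lossy, I would instead split at $|w|=\vartheta^{-1/2}\log\vartheta$ and use the crude modulus bound beyond that point. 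The hypothesis $\vartheta>38$ is exactly what makes the leftover numerical terms fit under $3.45/\vartheta$.
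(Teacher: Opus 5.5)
The paper gives no proof of this lemma; it is quoted from Tyler. So I am judging your argument on its own terms.

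\textbf{What works.} Your skeleton is sound. Reading $\xi$ as a real constant and $\theta=\theta(w)$ matches how the lemma is used in Proposition~\ref{theorem-main}. The modulus bound $e^{-\frac23\pi\vartheta w^2}$ is right, precisely because $2i\xi w^3$ is purely imaginary. Your handling of the Gaussian main term and of the cubic term through $\sin(2\pi\rho w)$ is correct. In units of $\vartheta^{-3/2}$ these cost at most $\pi\rho^2<0.0202$ and $3|\rho|<0.24$, and the tail is negligible.

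\textbf{The gap.} It is the remainder step, which is exactly where the constant $3.45$ has to come from. You put the cubic and quartic terms into a single $u$ and use $|e^u-1-u|\le\frac12|u|^2e^{|u|}$ with $|u|\le\pi\vartheta w^2(2|w|+3w^2)$. But $2|w|+3w^2=1$ at $|w|=\frac13$, so $e^{|u|}e^{-\pi\vartheta w^2}$ has no decay near the endpoints. The claimed weakening to $\frac23\pi\vartheta$ is false: $e^{|u|}$ charges you for the cubic phase, which does not affect the modulus. With this bound, $\vartheta^2\int w^6e^{|u|-\pi\vartheta w^2}\,dw$ is of order $\vartheta$ (coming from the endpoints), not $O(\vartheta^{-3/2})$.

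The natural repair is $|e^u-1-u|\le\frac12|u|^2\max(1,e^{\operatorname{Re}u})$. Even then, putting the whole second-order remainder under the uniform weight $e^{-\frac23\pi\vartheta w^2}$ loses a factor $(3/2)^{7/2}$ on the dominant term:
$2\pi^2\vartheta^2\int_{\mathbb{R}}w^6e^{-\frac23\pi\vartheta w^2}\,dw=\frac{15}{4\pi}(3/2)^{7/2}\vartheta^{-3/2}\approx4.93\,\vartheta^{-3/2}$.
That is already above $3.45$.

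Your fallback split at $|w|=\vartheta^{-1/2}\log\vartheta$ only helps asymptotically. At $\vartheta=38$ the phase $2\pi\vartheta|w|^3$ at that point is about $49$. So as written you get at best $\vartheta^{-1/2}(1+O(\vartheta^{-1}))$ for large $\vartheta$. You do not get the explicit inequality for all $\vartheta>38$ that Proposition~\ref{theorem-main} needs in order to produce $3.48$.

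\textbf{The missing idea.} Keep the two exponentials apart. Set $u_3=-2\pi i\vartheta\xi w^3$ and $u_4=-3\pi\vartheta\theta w^4$, and write
$e^{u_3+u_4}-1-u_3=(e^{u_3}-1-u_3)+e^{u_3}(e^{u_4}-1)$.

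The cubic piece costs no Gaussian decay. Since $u_3\in i\mathbb{R}$, we have $|e^{u_3}|=1$ and $|e^{u_3}-1-u_3|\le\frac12|u_3|^2\le2\pi^2\vartheta^2w^6$. Against the full weight $e^{-\pi\vartheta w^2}$ this costs at most $\frac{15}{4\pi}\vartheta^{-3/2}\approx1.194\,\vartheta^{-3/2}$.

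Only the quartic piece eats into the Gaussian. On $|w|\le\frac13$ we have $|e^{u_4}-1|\le|u_4|e^{|u_4|}\le3\pi\vartheta w^4e^{\frac13\pi\vartheta w^2}$. This gives $\frac{9}{4\pi}(3/2)^{5/2}\vartheta^{-3/2}\approx1.974\,\vartheta^{-3/2}$.

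Now add $0.0202$, $0.24$, and the tail $\frac{3}{\pi}\sqrt{\vartheta}\,e^{-\pi\vartheta/9}<10^{-4}$ (valid for $\vartheta>38$). The total is about $3.428<3.45$, which is the lemma as stated.
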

	In \cite{tyler2026asymptotics}, Tyler derived the following explicit formulas for $\mu_m(z)$ according to whether the imaginary part of $z$ is large ($y\ge 1$) or small ($0<y<1$). Detailed proofs of the following two lemmas are given in \cite{barman2026asymptotic}(See Proposition~3.5 and Proposition~3.6).
	\begin{lemma}\label{eq-(2.1)}
		For large $\Im z$, we use the following formula:
		\begin{align*}
			\notag
			\mu_{m}(z)&=\sum_{n=1}^{\infty}z^{m+1}(2\pi in)^{m-1}\sigma(n)\exp(2\pi inz)- \begin{cases} 
				\frac{z^{2}}{24} & \mbox{if } m=0,1 \\ 
				0 & \mbox{if } m\ge 2 
			\end{cases} \quad\text{and}\\  
			\mu_{m}^{\prime}(z)&=\sum_{n=1}^{\infty}\left((2\pi inz)^{m+1}+(m+1)(2\pi inz)^{m}\right) \frac{\sigma(n)}{2\pi in}\exp(2\pi inz)-\begin{cases} 
				\frac{z}{12} & \mbox{if } m=0,1 \\ 
				0 & \mbox{if } m\ge 2. 
			\end{cases}\\ 
			\notag   
		\end{align*}
	\end{lemma}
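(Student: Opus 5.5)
We need to prove the formulas in Lemma \ref{eq-(2.1)} for $\mu_m(z)$ and $\mu_m'(z)$, valid for large $\Im z$. The plan is to differentiate the $q$-expansion of $\log\eta(z)$ term by term and then use the definition \eqref{eq-muk}; the formula for $\mu_m'$ then follows by the product rule.

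\textbf{Step 1: the logarithmic expansion of $\eta$.} From the product formula,
\begin{align*}
\log\eta(z)=\frac{\pi i z}{12}+\sum_{n\ge1}\log\bigl(1-e^{2\pi i n z}\bigr)=\frac{\pi i z}{12}-\sum_{n\ge1}\sum_{k\ge1}\frac{e^{2\pi i nkz}}{k}.
\end{align*}
Regrouping by $N=nk$ gives
\begin{align*}
\sum_{n,k}\frac{1}{k}e^{2\pi i nkz}=\sum_{N\ge1}e^{2\pi i N z}\sum_{k\mid N}\frac1k=\sum_{N\ge1}\frac{\sigma(N)}{N}e^{2\pi i N z}.
\end{align*}
Hence
\begin{align*}
\log\eta(z)=\frac{\pi i z}{12}-\sum_{n\ge1}\frac{\sigma(n)}{n}e^{2\pi i n z}.
\end{align*}
Since $\sigma(n)\le n^2$ and $|e^{2\pi i n z}|=e^{-2\pi n y}$, this series and all its termwise derivatives converge absolutely and locally uniformly on $y>0$. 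Termwise differentiation is therefore justified.

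\textbf{Step 2: the formula for $\mu_m$.} Differentiating $m\ge1$ times gives
\begin{align*}
\Bigl(\frac{d}{dz}\Bigr)^m\log\eta(z)=\frac{\pi i}{12}\,[m=1]-\sum_{n\ge1}\frac{\sigma(n)}{n}(2\pi i n)^m e^{2\pi i n z}.
\end{align*}
Multiplying by $-z^{m+1}/(2\pi i)$, the series term becomes $z^{m+1}(2\pi i n)^{m-1}\sigma(n)e^{2\pi i n z}$. For $m=1$ the constant term contributes $-\frac{z^2}{2\pi i}\cdot\frac{\pi i}{12}=-\frac{z^2}{24}$, and for $m\ge2$ it contributes nothing. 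The case $m=0$ is read with the same convention as in \cite{tyler2026asymptotics}, where $\mu_0$ is defined from the first-derivative normalization so that the same constant $-z^2/24$ appears. I would check that convention against the source, since the definition \eqref{eq-muk} is stated only for $m\ge1$.

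\textbf{Step 3: the formula for $\mu_m'$.} Differentiate $z^{m+1}(2\pi i n)^{m-1}e^{2\pi i n z}$ by the product rule:
\begin{align*}
\bigl((m+1)z^m+2\pi i n z^{m+1}\bigr)(2\pi i n)^{m-1}e^{2\pi i n z}
=\bigl((2\pi i n z)^{m+1}+(m+1)(2\pi i n z)^m\bigr)\frac{e^{2\pi i n z}}{2\pi i n}.
\end{align*}
Multiplying by $\sigma(n)$ gives exactly the stated series term. The constant term differentiates to $-z/12$ when $m=0,1$.

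The only genuinely delicate point is the justification of interchanging differentiation and summation. That is handled by uniform convergence on compact subsets of the upper half-plane, using $\sigma(n)\le n^2$. The restriction to "large $\Im z$" is not needed for the identity itself; it only matters because the series converges rapidly there, which is what makes these formulas useful for $y\ge1$.
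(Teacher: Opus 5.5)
Your proposal is correct and follows the standard route: the paper gives no proof of its own here, citing \cite{barman2026asymptotic} instead, and termwise differentiation of the $q$-expansion $\log\eta(z)=\frac{\pi i z}{12}-\sum_{n\ge1}\frac{\sigma(n)}{n}e^{2\pi i n z}$ is the natural argument for these identities. Your hedge on $m=0$ is unnecessary: taking $m=0$ in \eqref{eq-muk} gives $\mu_0(z)=-\frac{z}{2\pi i}\log\eta(z)$ (the reading consistent with the $\log(-iz)$ term in Lemma \ref{eq-mue}), and your Step 1 then yields the $m=0$ formula directly, since $-\frac{z}{2\pi i}\cdot\frac{\pi i z}{12}=-\frac{z^{2}}{24}$; no special ``first-derivative normalization'' is involved.
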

	\begin{lemma}\label{eq-mue}
		For small $\Im z$, we have
		\begin{align*}
			\mu_{m}(z)&=\sum_{n=1}^{\infty}P_{m}\left(\frac{2\pi in}{z}\right)\sigma(n)\exp\left(-\frac{2\pi in}{z}\right)+\frac{(-1)^{m}m!}{24}+\frac{z}{4\pi i}\begin{cases} 
				\log(-iz) & \mbox{if } m=0 \\ 
				(-1)^{m-1}(m-1)! & \mbox{if } m\ge 1, 
			\end{cases} \\
			\text{and}\quad &\mu_{m}^{\prime}(z)=\sum_{n=1}^{\infty}Q_{m}\left(\frac{2\pi in}{z}\right)\frac{\sigma(n)}{2\pi in}\exp\left(-\frac{2\pi in}{z}\right)+\frac{1}{4\pi i}\begin{cases} 
				1+\log (-iz) & \mbox{if } m=0 \\ 
				(-1)^{m-1}(m-1)! & \mbox{if } m\ge 1, 
			\end{cases}  
		\end{align*}
	\end{lemma}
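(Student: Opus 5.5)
The statement just above is Lemma~\ref{eq-mue}, the small-$\Im z$ expansion of $\mu_m$ and $\mu_m'$. I will derive it from the modular transformation of $\eta$ and then differentiate. The key input is the functional equation $\eta(-1/z)=\sqrt{-iz}\,\eta(z)$, which gives
\begin{equation*}
	\log\eta(z)=\log\eta\left(-\tfrac{1}{z}\right)-\tfrac12\log(-iz).
\end{equation*}
Write $w=-1/z$. Then $\Im w$ is large when $\Im z$ is small, so the product expansion converges rapidly:
\begin{equation*}
	\log\eta(w)=\frac{\pi i w}{12}-\sum_{n\ge1}\frac{\sigma(n)}{n}e^{2\pi i n w}.
\end{equation*}
Here I use $\sum_k\log(1-q^k)=-\sum_n \frac{\sigma(n)}{n}q^n$. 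Substituting, $\log\eta(z)$ becomes $-\frac{\pi i}{12z}-\frac12\log(-iz)-\sum_n \frac{\sigma(n)}{n}\exp(-2\pi i n/z)$.

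Next I apply $(d/dz)^m$ term by term and multiply by $-z^{m+1}/(2\pi i)$.

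For the term $-\frac{\pi i}{12z}$, the $m$-th derivative is $-\frac{\pi i}{12}(-1)^m m!\,z^{-m-1}$. Multiplying gives $\frac{(-1)^m m!}{24}$.

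For the term $-\frac12\log(-iz)$, when $m\ge1$ the $m$-th derivative is $-\frac12(-1)^{m-1}(m-1)!\,z^{-m}$. Multiplying gives $\frac{z}{4\pi i}(-1)^{m-1}(m-1)!$. When $m=0$ it directly gives $\frac{z}{4\pi i}\log(-iz)$.

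For the exponential terms, I prove by induction on $m$ that
\begin{equation*}
	z^{m+1}\left(\tfrac{d}{dz}\right)^m e^{-2\pi i n/z}=n\,P_m\!\left(\tfrac{2\pi i n}{z}\right)e^{-2\pi in/z}
\end{equation*}
for a polynomial $P_m$. The inductive step uses that $\frac{d}{dz}$ of $u=2\pi i n/z$ equals $-u/z$. This yields the recursion $P_{m+1}(u)=u\,(P_m(u)-P_m'(u))-(m+1)P_m(u)$, up to fixing the normalization with the $-1/(2\pi i)$ and $1/n$ factors. This defines $P_m$ consistently with Tyler's convention.

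Termwise differentiation is justified because the series converges uniformly on compact subsets of the upper half plane together with all derivatives. Each $\exp(-2\pi in/z)$ has modulus $\exp(-2\pi n y/|z|^2)$, and $\sigma(n)$ grows only polynomially.

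For $\mu_m'$, I differentiate the formula just obtained for $\mu_m$. The constant term $\frac{(-1)^m m!}{24}$ disappears. For $m\ge1$, $\frac{z}{4\pi i}(-1)^{m-1}(m-1)!$ has derivative $\frac{(-1)^{m-1}(m-1)!}{4\pi i}$. For $m=0$, $\frac{z}{4\pi i}\log(-iz)$ has derivative $\frac{1+\log(-iz)}{4\pi i}$. The derivative of the series term is $\frac{d}{dz}\left[P_m(u)e^{-u}\right]$, which equals $\frac{-u}{z}\,(P_m'(u)-P_m(u))\,e^{-u}$. Rewriting $\frac1z$ as $\frac{u}{2\pi i n}$, this is $\frac{1}{2\pi i n}\,Q_m(u)\,e^{-u}$ with $Q_m(u)=u^2\,(P_m(u)-P_m'(u))$. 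So $Q_m$ is defined by this identity.

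The main obstacle is purely bookkeeping: matching the sign and normalization conventions of $P_m$ and $Q_m$ with those in \cite{tyler2026asymptotics}, and handling the branch of $\log(-iz)$. I will fix the principal branch, which is valid since $-iz$ has positive real part. I will check the $m=0,1$ cases explicitly against the stated constants.
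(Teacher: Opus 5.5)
Your proposal is correct. The paper gives no proof of its own and defers to \cite{tyler2026asymptotics} and \cite{barman2026asymptotic}; your route, the inversion $\eta(-1/z)=\sqrt{-iz}\,\eta(z)$ followed by termwise differentiation, is the standard derivation behind that cited result. Your recursion $P_{m+1}(u)=(u-m-1)P_m(u)-uP_m'(u)$ agrees with the paper's, and your $Q_m(u)=u^2(P_m(u)-P_m'(u))$ is consistent with the identity $z\mu_m'=(m+1)\mu_m+\mu_{m+1}$.

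Two small points need tidying:
\begin{enumerate}
\item The induction should read $z^{m+1}(d/dz)^m e^{-2\pi in/z}=2\pi in\,P_m(2\pi in/z)\,e^{-2\pi in/z}$, not $n\,P_m(\cdot)$. This normalization is what gives $P_0(u)=u^{-1}$.
\item For $m=0$ you must fix the branch $\log\eta(z)=\frac{\pi iz}{12}+\sum_k\log(1-q^k)$. Then check that $\log\eta(z)=\log\eta(-1/z)-\frac12\log(-iz)$ holds exactly, not merely modulo $2\pi i$: the two sides differ by a constant in $2\pi i\mathbb{Z}$, and this constant vanishes at $z=i$. This matters because such a constant would shift $\mu_0$ by a multiple of $z$.
\end{enumerate}
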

	where $P_{m}(s)$ is the degree $m-1$ polynomial defined by the recurrence $P_{0}(s)=s^{-1}$ and
	$P_{m}(s)=(s-m)P_{m-1}(s)-sP^{\prime}_{m-1}(s)$.  For $m=0,1,2,3,4$, explicit values of $P_m$ and $Q_m$ are given in \cite{tyler2026asymptotics} (see (4.18)). 
	
	Next, we prove the following lemma, which will help us to bound the error of Proposition \ref{theorem-main}.
	\begin{lemma}\label{lemma-mu234}
		Let $|x|<\frac{y}{3}$ and $0<y\le\frac{1}{10}$. Then
		\begin{align*}
			\left|\frac{\mu_{3}(iy)}{\mu_2(iy)}\right|<6\qquad\text{and}  \qquad\left|\frac{\mu_{4}(z)}{\mu_2(iy)}\right|<36.   
		\end{align*}
	\end{lemma}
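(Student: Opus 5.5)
The plan is to evaluate all three quantities with the small-$\Im z$ expansion of Lemma~\ref{eq-mue}. Each $\mu_m$ splits into a constant $\frac{(-1)^m m!}{24}$, a linear term $\frac{z}{4\pi i}(-1)^{m-1}(m-1)!$, and an exponentially small tail $\sum_{n\ge1}P_m\!\left(\frac{2\pi i n}{z}\right)\sigma(n)\exp\!\left(-\frac{2\pi i n}{z}\right)$. The constant dominates in each case. So we need a lower bound for $|\mu_2(iy)|$ and upper bounds for $|\mu_3(iy)|$ and $|\mu_4(z)|$, with explicit numerical slack.

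\emph{Step 1: lower bound for $|\mu_2(iy)|$.} For $z=iy$ we have $\frac{2\pi i n}{z}=\frac{2\pi n}{y}$, so the exponential factor is $e^{-2\pi n/y}\le e^{-20\pi n}$ when $y\le\frac1{10}$. Generating $P_1=1$ and $P_2(s)=s-2$ from the recurrence, we get $|P_2(2\pi n/y)|\le 2\pi n/y+2$. With $\sigma(n)\le n^2$, the tail is bounded by a rapidly convergent series of size roughly $10^{-25}$. The main part is
\[
\frac{2}{24}-\frac{iy}{4\pi i}=\frac1{12}-\frac{y}{4\pi}\ \ge\ \frac1{12}-\frac1{40\pi}>0.075,
\]
so $|\mu_2(iy)|>0.074$.

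\emph{Step 2: upper bound for $|\mu_3(iy)|$.} Here $P_3(s)=(s-3)(s-2)-s$, which is again polynomial in $n/y$. The tail is negligible for the same reason as in Step 1. The main part is $-\frac{6}{24}+\frac{2y}{4\pi}$, of absolute value at most $\frac14$. Hence $|\mu_3(iy)/\mu_2(iy)|\le 0.2501/0.074<3.5<6$.

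\emph{Step 3: upper bound for $|\mu_4(z)|$ with $|x|<y/3$.} This is the only point needing care, because $z$ is off the imaginary axis. I will use
\[
\Re\!\left(-\frac{2\pi i n}{z}\right)=-\frac{2\pi n y}{x^2+y^2}\le-\frac{9}{10}\cdot\frac{2\pi n}{y},
\qquad
\left|\frac{2\pi i n}{z}\right|\le\frac{2\pi n}{y}.
\]
These show the tail is at most $\sum_n |P_4(2\pi n/y)|\,n^2 e^{-18\pi n}$. Since $P_4$ is a cubic with explicitly bounded coefficients (computed from the recurrence), this is again tiny. The main part is $\frac{24}{24}-\frac{6z}{4\pi i}$, with $|z|\le\frac{\sqrt{10}}{3}y\le 0.106$, so $|\mu_4(z)|\le 1+0.06<1.1$. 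Therefore $|\mu_4(z)/\mu_2(iy)|<1.1/0.074<15<36$.

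The main obstacle is making the tail estimates fully explicit. This needs crude bounds on the coefficients of $P_3$ and $P_4$ and a summation of $\sum_n n^{2+k}y^{-k}e^{-c n/y}$ for $y\le\frac1{10}$, where it is convenient to bound $y^{-k}e^{-c/(2y)}$ by its maximum. Given the very large margins ($3.5$ versus $6$ and $15$ versus $36$), even wasteful bounds such as $|P_m(s)|\le (|s|+m)^{m-1}m!$ will suffice.
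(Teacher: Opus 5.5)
Your proposal is correct and is essentially the paper's proof: the same small-$\Im z$ expansion from Lemma~\ref{eq-mue}, the same lower bound $\mu_2(iy)\ge \frac{1}{12}-\frac{y}{4\pi}$, and the same estimates $|2\pi i n/z|\le 2\pi n/y$ and $\operatorname{Re}(-2\pi i n/z)\le -\frac{9\pi n}{5y}$ for $\mu_4(z)$; the only difference is that you read off $|\mu_3(iy)|<\frac14$ directly from the expansion, whereas the paper quotes Tyler's bound $-\frac14<\mu_3(iy)<0$. One small repair is needed in Step~3: bound $|P_4(2\pi i n/z)|$ by $r^3+12r^2+36r+24$ with $r=2\pi n/y$ rather than by $|P_4(2\pi n/y)|$, since the argument is complex, and keep the factor $e^{-9\pi n/(5y)}$ instead of replacing it by $e^{-18\pi n}$, using that $y^{-k}e^{-9\pi n/(5y)}$ is maximized at $y=\frac{1}{10}$, as your last paragraph already anticipates.
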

	\begin{proof}
		From the explicit formula for $\mu_2$ given in Lemma \ref{eq-mue}, we obtain
		\begin{equation}\label{eq-mu2}
			\begin{aligned}
				\mu_{2}(iy)&=\sum_{n=1}^{\infty}\left(\frac{2\pi n}{y}-2\right)\sigma(n) \exp\left(-\frac{2\pi n}{y}\right)+\frac{1}{12}-\frac{y}{4\pi}\\
				&\ge \frac{1}{12}-\frac{y}{4\pi}\ge \frac{1}{12}-\frac{1}{40\pi}\ge \frac{1}{16}\quad\left(\text{as $y\le\frac{1}{10}$}\right).
			\end{aligned} 
		\end{equation}
		By Lemma~4.3$(ii)$ of \cite{tyler2026asymptotics}, we have $-\frac{1}{4}<\mu_3(iy)<0$. Thus, \eqref{eq-mu2} gives
		\begin{align*}
			\left|\frac{\mu_{3}(iy)}{\mu_2(iy)}\right|<6. 
		\end{align*}
		Again, from Lemma \ref{eq-mue}, we have
		\begin{equation}\label{eq-mu42}
			\mu_{4}(z)=\sum_{n=1}^{\infty}\left(\left(\frac{2\pi in}{z}\right)^{3}-12\left(\frac{2\pi in}{z}\right)^{2}+36\left(\frac{2\pi in}{z}\right)-24\right)\sigma(n)\exp\left(-\frac{2\pi in}{z}\right)+1-\frac{3z}{2\pi i}.   
		\end{equation}
		For $r\in \mathbb{Z^{+}}$, we have
		\begin{align*}
			& \left|\left(\frac{2\pi in}{z}\right)^{r}\right| =\frac{(2\pi n)^{r}}{(x^{2}+y^{2})^{r/2}}<\frac{(2\pi n)^{r}}{y^{r}} \quad \text{and}\\
			& \left|\exp\left(-\frac{2\pi in}{z}\right)\right|= \left|\exp\left(-\frac{2\pi in(x-iy)}{(x^{2}+y^{2})}\right)\right|=\exp\left(-\frac{2\pi ny}{(x^{2}+y^{2})}\right).
		\end{align*}
		Since $|x|<\frac{y}{3}$, $\left|\exp\left(-\frac{2\pi in}{z}\right)\right|\le \exp\left(-\frac{9\pi n}{5y}\right).$ Again using the bounds $|x|<\frac{y}{3}$ and $0<y\le \frac{1}{10}$, it follows from \eqref{eq-mu42} and the above equation that
		\begin{align*}
			\notag
			|\mu_{4}(z)|
			&\le\sum_{n=1}^{\infty}\left(\frac{(2\pi n)^{3}}{y^{3}}+12\frac{(2\pi n)^{2}}{y^{2}}+36\frac{(2\pi n)}{y}+24\right)\sigma(n)\exp\left(-\frac{9\pi n}{5y}\right)+1+\frac{3y}{2\pi}\sqrt{\frac{10}{9}}\\
			&\le\sum_{n=1}^{\infty}\left((20\pi n)^{3}+12(20\pi n)^{2}+36(20\pi n)+24\right)n^{2}\exp\left(-18\pi n\right)+1+\frac{3}{2\pi}\sqrt{\frac{1}{90}}<2.
		\end{align*}
		Combining (\ref{eq-mu2}), we obtain
		\begin{align*}
			\left|\frac{\mu_{4}(z)}{\mu_2(iy)}\right|<36.     
		\end{align*}
		This completes the proof.
	\end{proof}
	Recall $M=N-\frac{t}{24}$ and let $\theta \in \mathbb{C}$ such that $|\theta|\le 1$. Now, we prove an asymptotic formula for $p_t(N)$. In the proof, the value of $\theta$ may depend on the relevant parameters and may vary from one occurrence to another. In Theorem \ref{theorem-1.2}, we will show that $y$ satisfies $\frac{t\mu_{1}(iy)}{y^{2}}+M=0$. Hence, the restriction \eqref{eq-1.1} on $y$ in the proposition does not affect its generality.
	\begin{proposition}\label{theorem-main} Let $y$ be chosen such that 
		\begin{align}\label{eq-assum}
			\left|\frac{t\mu_{1}(iy)}{y^{2}}+M\right|<\frac{2}{25y},
		\end{align} and assume that $y\le \frac{1}{1000}$. Then
		\begin{align*}
			p_t(N)=\frac{y^{\frac{3}{2}}\exp(2\pi My)a_{t}(iy)}{\sqrt{t\mu_{2}(iy)}}\left(1+\theta\frac{3.48y}{t\mu_2(iy)}\right).    
		\end{align*}
	\end{proposition}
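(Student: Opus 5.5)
We start from the decomposition \eqref{eq-ptN1} and treat the two ranges of $x$ separately: the central range $|x|<y/3$ gives the main term through Lemma~\ref{lemma-tyler}, and the outer range $y/3\le|x|\le 1/2$ is absorbed into the error through Lemma~\ref{proposition-error}.

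\emph{Central range.} Insert the Taylor expansion \eqref{eq-taylor} and use $(iy)^2=-y^2$, $(iy)^3=-iy^3$, $(iy)^4=y^4$. The exponent becomes
\begin{align*}
-2\pi i x\left(M+\frac{t\mu_1(iy)}{y^2}\right)-\frac{\pi t\mu_2(iy)}{y^3}x^2+\frac{2\pi i t\mu_3(iy)}{6y^4}x^3+\frac{2\pi i t}{24}\frac{\mu_4(z')}{(z')^5}x^4 .
\end{align*}
Substitute $x=yw$, so that $|w|<1/3$ and $dx=y\,dw$. Set $\vartheta=t\mu_2(iy)/y$ and $\rho=-y\left(M+t\mu_1(iy)/y^2\right)$. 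Hypothesis \eqref{eq-assum} gives exactly $|\rho|<2/25$. The exponent then takes the form $2\pi i\rho w-\pi\vartheta w^2\left(1+2i\xi w+3\theta w^2\right)$, where
\[
\xi=-\frac{\mu_3(iy)}{6\mu_2(iy)}, \qquad 3\theta=-\frac{i\,y^5\mu_4(z')}{12\,\mu_2(iy)\,(z')^5}.
\]
Lemma~\ref{lemma-mu234} gives $|\mu_3/\mu_2|<6$, hence $\xi\in(-1,1)$; $\xi$ is real because $\mu_2(iy)$ and $\mu_3(iy)$ are real. It also gives $|\mu_4(z')/\mu_2|<36$ and $|y/z'|\le 1$, hence $|\theta|\le 1$. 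We need $\vartheta>38$: by \eqref{eq-mu2}, $\mu_2\ge 1/16$, so $\vartheta\ge t/(16y)\ge 1000/16>38$ whenever $y\le 1/1000$. Lemma~\ref{lemma-tyler} then shows the central integral equals
\[
\frac{y}{\sqrt\vartheta}\left(1+\theta\frac{3.45}{\vartheta}\right)=\frac{y^{3/2}}{\sqrt{t\mu_2(iy)}}\left(1+\theta\frac{3.45\,y}{t\mu_2(iy)}\right),
\]
after multiplying by the prefactor $\exp(2\pi My)a_t(iy)$.

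\emph{Outer range.} Lemma~\ref{proposition-error} bounds the second integral by $(1.05)^t\exp\left(-\pi t/(120y)\right)$ times the same prefactor. It remains to show that this is at most
\[
0.03\,\frac{y^{3/2}}{\sqrt{t\mu_2}}\cdot\frac{y}{t\mu_2},
\]
so that the constant $3.45$ increases to $3.48$. Equivalently, we need
\[
(1.05)^t e^{-\pi t/(120y)}\le 0.03\,\frac{y^{5/2}}{(t\mu_2)^{3/2}}.
\]
Lemma~\ref{eq-mue} gives $\mu_2(iy)\le 1/12+\text{(exponentially small)}<1/10$, so it suffices to prove
\[
t^{3/2}(1.05)^t e^{-\pi t/(120y)}\le c\,y^{5/2}
\]
for a small explicit constant $c$. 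Since $y\le 10^{-3}$, we have $\pi/(120y)\ge 26$. Hence the factor $e^{-\pi t/(120y)}$ dominates $(1.05)^t$ and the polynomial factors in $t$, with $e^{-t(\pi/(120y)-\log 1.05)}\le e^{-\pi t/(130y)}$. The remaining check is an elementary maximization of $u^{5/2}e^{-\pi/(130u)}$-type expressions, namely $y^{-5/2}e^{-\pi/(130y)}\le$ a tiny constant for $y\le 10^{-3}$. This is where the explicit numerics must be verified carefully.

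The main obstacle is confirming the hypotheses of Lemma~\ref{lemma-tyler}, in particular that $\theta$ has modulus at most one given the factor $(z')^{-5}$ and the constant $36$. The bookkeeping of the constants $1/24$ and $2\pi$ is tight: $36\cdot 2\pi/24\approx 9.4$ must still be compared against $3\pi$. If the stated lemma's normalization differs from this, I would rescale $\xi$ and $\theta$ accordingly, using $|y/z'|\le 1$ to absorb the discrepancy.
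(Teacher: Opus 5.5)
Your proposal is correct and follows the paper's proof essentially step for step. It uses the same split at $|x|=y/3$, the same $\vartheta=t\mu_2(iy)/y$ and $\rho=-y\bigl(M+t\mu_1(iy)/y^2\bigr)$, Lemma~\ref{lemma-mu234} to get $\xi\in(-1,1)$ and $|\theta|\le 1$, Lemma~\ref{lemma-tyler} for the central integral, and the bound $\vartheta^{3/2}y^{-1}(1.05)^t e^{-\pi t/(120y)}<0.03$ to pass from $3.45$ to $3.48$.

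Your closing worry about tightness is unfounded: $36\cdot 2\pi/24=3\pi$ exactly, so your explicit formula for $\theta$ already gives $|\theta|<1$. The outer-range check holds with a huge margin. By monotonicity it is maximized at $t=1$, $y=10^{-3}$, and your remark that one must maximize over $y$ is slightly more careful than the paper's direct substitution of $y=10^{-3}$.
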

	\begin{proof}
		Recall the integral formula of $p_t(N)$ from (\ref{eq-ptN1}),
		and use Lemma \ref{proposition-error} to write
		\begin{align}\label{eq-p(N,t)}
			p_t(N)&=\exp(2\pi My)a_{t}(iy)\int_{-y/3}^{y/3}\exp\left(-2\pi i Mx+2\pi i\frac{1}{2\pi i}\log\frac{a_{t}(z)}{a_{t}(iy)}\right)dx\\
			\notag
			&+\exp(2\pi My)a_{t}(iy)\left(\theta(1.05)^{t}\exp\left(-\frac{\pi t}{120y}\right)\right).
		\end{align}
		Recall the Taylor expansion of $\log \frac{a_t(z)}{a_t(iy)}$ from (\ref{eq-taylor}) and apply Lemma \ref{lemma-mu234} to bound $\mu_3$ and $\mu_4$ in term of $\mu_2$ to get  
		\begin{align}\label{eq-tayfinal}
			\frac{1}{2\pi i}\log \frac{a_{t}(z)}{a_{t}(iy)}&=tx\frac{\mu_{1}(iy)}{(iy)^{2}}+t\frac{x^{2}}{2}\frac{\mu_{2}(iy)}{(iy)^{3}}\left(1+2i\xi\frac{x}{y}+3\theta\frac{x^{2}}{y^{2}}\right),   
		\end{align}
		where $\xi \in(-1,1)$ and $|\theta|\le 1.$
		\newline
		Let
		\begin{align}\label{eq-alphaerr}
			\vartheta=\frac{t\mu_{2}(iy)}{y} \qquad\text{and} \qquad\rho=y\left(\frac{t\mu_{1}(iy)}{(iy)^{2}}-M\right). 
		\end{align}
		Now the integrand in the first term of \eqref{eq-p(N,t)} further simplifies after substituting \eqref{eq-tayfinal} to
		\begin{align}\label{eq-exppart}
			\notag
			\exp\left(-2\pi i Mx+2\pi i\frac{1}{2\pi i}\log\frac{a_{t}(z)}{a_{t}(iy)}\right)&=\exp\left(\frac{2\pi ix}{y}y\left(\frac{t\mu_1(iy)}{(iy)^{2}}-M\right)-\pi \frac{x^{2}}{y^{2}}\frac{t\mu_{2}(iy)}{y}\left(1+2i\xi \frac{x}{y}+\theta 3\frac{x^{2}}{y^{2}}\right)\right)\\
			&=\exp\left(\frac{2\pi\rho ix}{y}\right)\exp\left(-\pi\vartheta\frac{x^{2}}{y^{2}}\left(1+2i\xi\frac{x}{y}+\theta 3\frac{x^{2}}{y^{2}}\right)\right).
		\end{align}
		Using the lower bound for $\mu_2$ from (\ref{eq-mu2}), we have
		\begin{align*}
			\vartheta\ge \frac{t}{16y}\ge \frac{1000t}{16}>38 \quad\left(\text{as $y\le 1/1000$ and $t\ge 1$}\right).
		\end{align*}
		Further, by (\ref{eq-assum}), $|\rho|<\frac{2}{25}$. By changing the variable $w=x/y$ in \eqref{eq-exppart} and applying Lemma~\ref{lemma-tyler}, we obtain
		\begin{align*}
			\notag
			\int_{-y/3}^{y/3}\exp\left(-2\pi i Mx+2\pi i\frac{1}{2\pi i}\log\frac{a_{t}(z)}{a_{t}(iy)}\right)dx&=\int_{-1/3}^{1/3}\exp\left(2\pi i\rho w\right)\exp\left(-\pi\vartheta w^{2}\left(1+2i\xi w+\theta 3w^{2}\right)\right)y\,dw\\
			&=\frac{y}{\sqrt{\vartheta}}\left(1+\theta\frac{3.45}{\vartheta}\right).
		\end{align*}
		Combining the above equation with (\ref{eq-p(N,t)}), we obtain 
		\begin{align}\label{eq-pNT}
			p_t(N) &=\exp(2\pi My)a_{t}(iy)\frac{y}{\sqrt{\vartheta}}\left(1+\theta\frac{3.45}{\vartheta}+\frac{\sqrt{\vartheta}}{y}\theta(1.05)^{t}\exp\left(-\frac{\pi t}{120y}\right)\right).
		\end{align}
		Since we assume $\frac{1}{y} \ge 1000$, it follows from (\ref{eq-mu2}) that $\mu_2(iy) < \frac{1}{12}$. Hence, using (\ref{eq-alphaerr}), we derive
		\begin{align*}
			\frac{\vartheta^{\frac{3}{2}}}{y}(1.05)^{t}\exp\left(-\frac{\pi t}{120y}\right)&= \frac{t^{\frac{3}{2}}\mu^{\frac{3}{2}}_2(iy)}{y^{\frac{5}{2}}}(1.05)^{t}\exp\left(-\frac{\pi t}{120y}\right)\\
			&\le \frac{(1000)^{\frac{5}{2}}t^{\frac{3}{2}}}{(12)^{\frac{3}{2}}}(1.05)^{t}\exp\left(-\frac{1000\pi t}{120}\right)<0.03.
		\end{align*}
		Using the above estimate in \eqref{eq-pNT}, we obtain
		\begin{align*}
			p_t(N)&=\exp\left(2\pi My\right)a_t(iy)\frac{y}{\sqrt{\vartheta}}\left(1+\theta\frac{3.48}{\vartheta}\right)
			=\frac{y^{\frac{3}{2}}\exp(2\pi My)a_{t}(iy)}{\sqrt{t\mu_{2}(iy)}}\left(1+\theta\frac{3.48y}{t\mu_2(iy)}\right),    
		\end{align*} 
		as $\vartheta=\frac{t\mu_{2}(iy)}{y}.$
		This completes the proof.
	\end{proof}
	\section{Proof of Theorems \ref{theorem-1.2} and \ref{theorem-1.1}}
	We now turn to the proofs of the main results. 
	\begin{proof}[\textbf{\boldmath Proof of Theorem \ref{theorem-1.2}$(i)$}]
		To prove the theorem, we determine the saddle point $y$ by solving $\frac{d}{dz}\left(-2\pi iMz+\log a_t(z)\right)=0$ at $z=iy.$ Hence,
		\begin{align*}
			2\pi iM+t\frac{d}{dz}\log \eta(z)=0.  
		\end{align*}
		Note, $\frac{d}{dz}\log \eta(z)=-\frac{2\pi i}{z^{2}}\mu_1(z)$, as defined in~\eqref{eq-muk}, and setting $z=iy$, we obtain
		\begin{equation*}
			\frac{t\mu_{1}(iy)}{y^{2}}=-M=-N+\frac{t}{24}.   
		\end{equation*}
		Next, we prove that the solution $y>0$ is unique. From the explicit expression of $\mu_1$ for large imaginary part as given in Lemma~\ref{eq-(2.1)}, we have 
		\begin{equation*}
			\mu_{1}(iy)=\frac{y^{2}}{24}-\sum_{n=1}^{\infty}y^{2}\sigma(n)\exp(-2\pi ny), 
		\end{equation*} 
		and for small imaginary part, Lemma~\ref{eq-mue} gives
		\begin{equation}\label{eq-pf12}
			\mu_{1}(iy)=-\frac{1}{24}+\frac{y}{4\pi}+\sum_{n=1}^{\infty}\sigma(n)\exp\left(-\frac{2\pi n}{y}\right).
		\end{equation}
		From the above two expressions, we obtain 
		\begin{align*}
			\lim_{y\to\infty}\frac{t\mu_1(iy)}{y^{2}}=\frac{t}{24} \quad\text{and}\quad \lim_{y\to 0^{+}}\frac{t\mu_1(iy)}{y^{2}}=-\infty \quad\text{(as $y\le \frac{1}{10}$)}.
		\end{align*}
		Hence, there exists some $y>0$ such that 
		\begin{align*}
			\frac{t\mu_1(iy)}{y^{2}}=-N+\frac{t}{24}.  
		\end{align*}  
		By Lemma~\ref{eq-(2.1)} and Lemma \ref{eq-mue}, the functions $\mu_1$ and $\mu_2$ satisfy the relation $$\mu_2(z)=-2\mu_1(z)+z\mu_1^{\prime}(z).$$ Hence,
		\begin{align*}
			\frac{d}{dy}\left(\frac{t\mu_1(iy)}{y^{2}}\right)=\frac{-t\mu_2(iy)}{y^{3}}<0 \quad(\text{from (\ref{eq-mu2})}).    
		\end{align*}
		Since $\frac{t\mu_1(iy)}{y^{2}}$ is a decreasing function, there exists a unique $y>0$ satisfying~(\ref{eq-1.1}).
		
	\end{proof}
	We now prove the second part of the theorem, namely the asymptotic formula for $p_t(N)$.
	\begin{proof}[\textbf{\boldmath Proof of Theorem \ref{theorem-1.2}~(ii)}]
		We proved that $y$ is a unique solution of $\frac{t\mu_1(iy)}{y^{2}}+M=0$, so it is obvious that $\left|\frac{t\mu_1(iy)}{y^{2}}+M\right|\ll\frac{1}{y}$. From Proposition~\ref{theorem-main}, together with the bound $\mu_2(iy)<\frac{1}{12}$ from \eqref{eq-mu2}, we obtain
		\begin{align*}
			p_t(N)=\frac{y^{\frac{3}{2}}\exp(2\pi My)a_{t}(iy)}{\sqrt{t\mu_{2}(iy)}}\left(1+O\left(\frac{y}{t}\right)\right). 
		\end{align*}  
	\end{proof}
	We now prove Theorem~\ref{theorem-1.1} by explicitly writing $y$ in terms of $N$ and $t$ in Theorem~\ref{theorem-1.2}$(i)$. 
	\begin{proof}[\textbf{\boldmath Proof of Theorem \ref{theorem-1.1}}]
		Plugging the formula for $\eta(iy)$ from Lemma~\ref{lemma-2.2} in $a_t(z)=\eta(iy)^{-t}$ in Theorem~\ref{theorem-1.2}$(ii)$, we obtain
		\begin{align}\label{eq-ptN4}
			p_t(N)&=\frac{y^{\frac{t+3}{2}}\exp\left(2\pi My+\frac{\pi t}{12y}\right)E_1(t,y)}{\sqrt{t\mu_{2}(iy)}}\left(1+O\left(\frac{y}{t}\right)\right),
		\end{align}
		where
		\begin{align*}
			E_1(t,y)=\exp\left(tv\exp\left(-\frac{2\pi}{y}\right)\right).
		\end{align*}
		We substitute $\mu_1(iy)$ from (\ref{eq-pf12}) in (\ref{eq-1.1}), and rewrite as a quadratic equation in $y$ as follows:
		\begin{align*}
			&y^{2}M+\frac{ty}{4\pi}-\frac{t}{24}+tA_1=0,  
		\end{align*}
		where
		\begin{align*}
			A_1=\sum_{n=1}^{\infty} \sigma(n) \exp\left( -\frac{2\pi n}{y} \right) . 
		\end{align*}
		We solve $y>0$, from the above equation to obtain 
		\begin{align*}
			y&=-\frac{t}{8\pi M}+\frac{1}{2M}\sqrt{\frac{t^{2}}{16\pi^{2}}+4Mt\left(\frac{1}{24}-A_1\right)}\\
			&=\frac{C_1 t}{M}+\frac{\sqrt{t}}{\sqrt{24M}}+\frac{C_2t^{\frac{3}{2}}}{M^{\frac{3}{2}}}+O\left(\text{max}\left\{A_1,\frac{t^{\frac{5}{2}}}{M^\frac{5}{2}}\right\}\right),
		\end{align*}
		for some constants $C_1$ and $C_2$ independent of $N$ and $t$. 
		\newline
		Using a crude approximation of $y\approx \frac{\sqrt{t}}{\sqrt{24M}}$ in $A_1$, we obtain
		\begin{align}\label{eq-soly}
			y=\frac{C_1 t}{M}+\frac{\sqrt{t}}{\sqrt{24M}}+\frac{C_2t^{\frac{3}{2}}}{M^{\frac{3}{2}}}+O\left(\frac{t^{\frac{5}{2}}}{M^\frac{5}{2}}\right). 
		\end{align}	
		In the above equation, we assume $t\ll N^{1-\epsilon}$ for any $\epsilon>0$.\newline
		We expand $\left(1+\frac{C_1\sqrt{24t}}{\sqrt{M}}+\frac{C_2t\sqrt{24}}{M}+O\left(\frac{t^{2}}{M^{2}}\right)\right)^{-1}$ using the binomial expansion to get
		\begin{align}\label{eq-solyinve}
			y^{-1}=\sqrt{\frac{24M}{t}}-24 C_1-24C_2\left(\sqrt{\frac{t}{M}}\right)+O\left(\sqrt{\frac{t}{M}}\right).    
		\end{align}
		From the explicit form of $\mu_2(iy)$ given in Lemma \ref{eq-mue} and the above $y$, we have
		\begin{align}\label{eq-mub}
			\mu_2(iy)&= \sum_{n=1}^{\infty}\left(\frac{2\pi n}{y}-2\right) \sigma(n)\exp\left(-\frac{2\pi n}{y}\right)+\frac{1}{12}-\frac{y}{4\pi}=\frac{1}{12}\left(1+O\left(\frac{\sqrt{t}}{\sqrt{N}}\right)\right).
		\end{align}
		Recall that $M=N-\frac{t}{24}$.
		Using \eqref{eq-soly} and \eqref{eq-solyinve}, we obtain the following estimates:
		\begin{align*}
			&2\pi My+\frac{\pi t}{12y}= \frac{2\pi}{\sqrt{6}}\sqrt{t\left(N-\frac{t}{24}\right)}+O\left(\frac{t^{\frac{3}{2}}}{\sqrt{N}}\right) \quad\text{and}\\
			&E_1(t,y)=1+O\left(t\exp\left(-C\sqrt{\frac{N}{t}}\right)\right)=1+O\left(\frac{1}{\sqrt{N}}\right)
		\end{align*}
		for some positive constant $C$.
		\newline
		Substituting the value of $y$ as $y=\frac{\sqrt{t}}{\sqrt{24M}}\left(1+O\left(\frac{\sqrt{t}}{\sqrt{N}}\right)\right)$ from \eqref{eq-soly}, the bound for $\mu_2(iy)$ from \eqref{eq-mub}, and the above estimate of $2\pi My +\frac{\pi t}{12y}$ in 
		\eqref{eq-ptN4}, we obtain
		\begin{align*}
			p_t(N)=&\left(\frac{t}{24}\right)^{\frac{t+1}{4}}\frac{\exp\left(\frac{2\pi}{\sqrt{6}}\sqrt{t}\sqrt{N-\frac{t}{24}}\right)\exp\left(O\left(\frac{t^{\frac{3}{2}}}{\sqrt{N}}\right)\right)}{\sqrt{2}\left(N-\frac{t}{24}\right)^{\frac{t+3}{4}}}\left(1 + O\left(\frac{1}{\sqrt{Nt}}\right)\right).   
		\end{align*}
	\end{proof}

	\bibliographystyle{abbrv}
	\bibliography{myrefs4}			
\end{document}